\newtheorem{theorem}{Theorem}
\newtheorem{prop}{Proposition}
\newtheorem{lemma}{Lemma}
\newtheorem{rem}{Remark}
\newtheorem{exmp}{Example}
\begin{document}
\author{Mark Pankov}
\title[Characterizations of strong semilinear embeddings]
{Characterizations of strong semilinear embeddings in terms of general linear and projective linear groups}
\subjclass[2010]{15A04, 20G15}
\address{Department of Mathematics and Computer Science, University of Warmia and Mazury,
S{\l}oneczna 54, 10-710 Olsztyn, Poland}
\keywords{semilinear embedding, general linear group, projective linear group}
\email{pankov@matman.uwm.edu.pl}

\maketitle

\begin{abstract}
Let $V$ and $V'$ be vector spaces over division rings.
Suppose $\dim V$ is finite and not less than $3$. 
Consider a mapping $l:V\to V$ with the following property: 
for every $u\in {\rm GL}(V)$ there is $u'\in {\rm GL}(V')$ such that $lu=u'l$.
Our first result states that $l$ is a strong semilinear embedding if 
$l|_{V\setminus\{0\}}$ is non-constant and 
the dimension of the subspace of $V'$ spanned by $l(V)$ is not greater than $n$.
We present examples showing that these conditions can not be omitted.
In some special cases, this statement can be obtained from \cite{DH,Zha}.
Denote by ${\mathcal P}(V)$ the projective space associated with $V$
and consider the mapping $f:{\mathcal P}(V)\to {\mathcal P}(V')$ with the following property: 
for every $h\in {\rm PGL}(V)$ there is $h'\in {\rm PGL}(V')$ such that $fh=h'f$.
By the second result,
$f$ is induced by a strong semilinear embedding of $V$ in $V'$
if $f$ is non-constant and its image is contained in a subspace of $V'$
whose dimension is not greater than $n$, we also require that $R'$ is a field.
\end{abstract}

\section{Introduction and statement of results}
Let $V$ and $V'$ be left vector spaces over division rings $R$ and $R'$, respectively.
Suppose that $\dim V=n$  is finite.

A mapping $l:V\to V'$ is called {\it semilinear} if
$$l(x+y)=l(x)+l(y)\;\;\;\;\;\forall\;x,y\in V$$
and there is a homomorphism $\sigma:R\to R'$ such that
$$l(ax)=\sigma(a)l(x)\;\;\;\;\;\forall\;a\in R,x\in V.$$
If $l$ is non-zero then there exists precisely one non-zero homomorphism $\sigma$ satisfying this condition
and the mapping $l$ is said to be $\sigma$-{\it linear}.
In the case when $R=R'$ and $\sigma$ is identity, the mapping $l$ is linear.
If $l$ is $\sigma$-linear then for every non-zero scalar $a\in R'$
the mapping $al$ is semilinear over the homomorphism $b\to a\sigma(b)a^{-1}$.

All non-zero homomorphisms of division rings are injective.
Each homomorphism of ${\mathbb R}$ to itself is zero or identity and 
every semilinear mapping between real vector spaces is linear.
However, there are infinitely many automorphisms of ${\mathbb C}$
and there exist non-zero non-surjective homomorphisms of ${\mathbb C}$ to itself,
see \cite[p. 114]{Benz} and \cite{PBYale}.

A {\it semilinear isomorphism} is a semilinear bijection such that 
the associated homomorphism of division rings is an isomorphism.
Note that there exist semilinear bijections over non-surjective homomorphisms
(for example, the natural bijection of ${\mathbb R}^{2n}$ to ${\mathbb C}^{n}$).
A semilinear injection $l:V\to V'$ is called a {\it semilinear $k$-embedding} if
it transfers any $k$ linearly independent vectors to linearly independent vectors.
In the case when $k=n$, we say that $l$ is a {\it strong semilinear embedding}.
Examples of non-strong semilinear embeddings can be found in 
\cite{BR,Havlicek-sur,Kreuzer1}.

Every strong semilinear embedding $l:V\to V'$ satisfies the following condition:
\begin{enumerate}
\item[(GL)] for every $u\in {\rm GL}(V)$ there is $u'\in {\rm GL}(V')$ such that $lu=u'l$.
\end{enumerate}
Indeed, if $u\in {\rm GL}(V)$ and $x_{1},\dots,x_{n}$ form a base of $V$
then $l(x_{1}),\dots,l(x_{n})$ are linearly independent and the same holds for $lu(x_{1}),\dots,lu(x_{n})$;
an easy verification shows that the equality $lu=u'l$ holds for  any $u'\in {\rm GL}(V')$ transferring every $l(x_{i})$ to $lu(x_{i})$.

We say that an arbitrary mapping $l:V\to V'$ is a GL-{\it mapping} if it satisfies the condition (GL).

Suppose that $x'\in V'$ and $g:V\to V'$ is a mapping such that $g(x)=x'$ for every $x\in V\setminus\{0\}$
(we do not require that $g(0)=x'$ or $g(0)=0$).
If $u\in {\rm GL}(V)$ then the equality  $gu=u'g$ holds, for example, for $u'={\rm id}_{V'}$.
Thus $g$ is a GL-mapping.

A GL-mapping of $V$ to $V'$ is said to be {\it non-trivial} if its restriction to $V\setminus \{0\}$ is non-constant.

For every mapping $g:V\to V'$ and every subspace $S\subset V$ we denote by $S_{g}$
the subspace of $V'$ spanned by $g(S)$,
in particular, we write $V_{g}$ for the subspace of $V'$ spanned by the image of $g$.

Our first result is the following characterization of strong semilinear embeddings.

\begin{theorem}\label{theorem-main1}
Let  $n\ge 3$ and let $g:V\to V'$ be a non-trivial {\rm GL}-mapping.
If $\dim V_{g}\le n$ then $g$ is a strong semilinear embedding.
\end{theorem}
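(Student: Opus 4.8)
The plan is to extract all the needed geometric structure from the single axiom (GL) by playing stabilizers and transitivity of ${\rm GL}(V)$ on $V$ against the fact that each intertwining partner $u'$ is a linear bijection. The basic mechanism I would use repeatedly: if $u\in{\rm GL}(V)$ fixes every vector of some set, then in $gu=u'g$ the partner $u'$ fixes $g$ of each of those vectors, hence fixes the subspace they span pointwise. As a warm-up this already disposes of the values of $g$: since ${\rm GL}(V)$ is transitive on $V\setminus\{0\}$ and every $u'$ fixes $0$, a single zero value $g(x_{0})=0$ with $x_{0}\ne 0$ would force $g\equiv 0$ on $V\setminus\{0\}$, contradicting non-triviality; so $g(x)\ne 0$ for all $x\ne 0$.

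The heart of the argument, which I expect to be the main obstacle, is to show that $g$ carries any $k\le n$ linearly independent vectors to linearly independent vectors; I would prove this by induction on $k$. The engine is the case $k=2$. Assuming $g(y)=c\,g(x)$ for independent $x,y$, I would use that the pointwise stabilizer of $x$ is transitive on $V\setminus\langle x\rangle$ to force $g$ to be constant on $V\setminus\langle x\rangle$, and a second stabilizer (of a vector off $\langle x\rangle$) to force $g$ constant on $\langle x\rangle\setminus\{0\}$; thus $g|_{V\setminus\{0\}}$ takes at most two values. Choosing $u\in{\rm GL}(V)$ that does not preserve $\langle x\rangle$ then imposes two incompatible demands on $u'$ (one from images of points of $\langle x\rangle$, one from generic off-line points) unless the two values coincide, i.e. unless $g$ is constant, contradicting non-triviality. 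For the inductive step, a dependence among $g(x_{1}),\dots,g(x_{k})$ puts $g(x_{k})$ in the span of the images of $x_{1},\dots,x_{k-1}$, and the pointwise stabilizer of $\langle x_{1},\dots,x_{k-1}\rangle$ (transitive on the complement) then forces $g$ to be constant on $V\setminus\langle x_{1},\dots,x_{k-1}\rangle$; since $\dim\langle x_{1},\dots,x_{k-1}\rangle=k-1\le n-1$, that complement contains two independent vectors, contradicting the case $k=2$.

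This independence property is decisive. It gives $\dim V_{g}\ge n$, so together with the hypothesis $\dim V_{g}\le n$ we get $\dim V_{g}=n$ and every basis of $V$ is sent to a basis of $V_{g}$; consequently each $u'|_{V_{g}}$ is uniquely determined and $\rho\colon u\mapsto u'|_{V_{g}}$ is a group homomorphism ${\rm GL}(V)\to{\rm GL}(V_{g})$ with $g\circ u=\rho(u)\circ g$. Next I would show $g(S)\subseteq S_{g}$ for every subspace $S$, the crucial sub-lemma being that for a subspace $T$ with $\dim T\ge 2$ the $\rho$-action of ${\rm GL}(T)$ (extended by the identity on a complement) on $T_{g}$ has no nonzero fixed vector; this follows from the case $k=2$ by testing with coordinate permutations and transvections. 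It is here that the hypothesis $n\ge 3$ first enters, since the complement of a line must have dimension at least $2$. In particular $g(\langle x\rangle)\subseteq\langle g(x)\rangle$, so $g$ induces a map on projective points.

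Finally I would reconstruct the semilinear structure explicitly, avoiding any appeal to the fundamental theorem of projective geometry. Fix a basis $e_{1},\dots,e_{n}$ and set $e_{i}'=g(e_{i})$. Using permutations and the right-scaling maps, I obtain a multiplicative $\sigma\colon R\to R'$ with $g(ae_{i})=\sigma(a)e_{i}'$ independent of $i$, and global homogeneity $g(ax)=\sigma(a)g(x)$. The delicate point is upgrading the induced collineation to genuine additivity, i.e. eliminating a stray global scalar: swap-symmetry yields $g(e_{i}+e_{j})=\alpha(e_{i}'+e_{j}')$, and computing $g(e_{1}+e_{2}+e_{3})$ in two ways (via a transvection versus full symmetry) forces $\alpha=1$, once more using $n\ge 3$. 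Coordinate-scaling maps then give $g(e_{1}+ce_{2})=e_{1}'+\sigma(c)e_{2}'$, a transvection $e_{2}\mapsto e_{1}+e_{2}$ delivers additivity of $\sigma$, and an induction on the number of coordinates yields $g(\sum a_{i}e_{i})=\sum\sigma(a_{i})e_{i}'$; the fixed-point-free action forces $g(0)=0$. Hence $g$ is $\sigma$-linear, and being nonzero on nonzero vectors it is injective, so by the independence property it is a strong semilinear embedding.
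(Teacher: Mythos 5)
Your proposal is correct in substance, but it takes a genuinely different route from the paper in both of its halves. For the combinatorial core, the paper applies Proposition \ref{prop1} (the classification of finite sets on which every permutation extends to an element of ${\rm GL}(V)$) to the image of a basis; because of the ``simplex'' configuration $y_{1},\dots,y_{n-1},-(y_{1}+\dots+y_{n-1})$ this only yields preservation of independence for $n-1$ vectors (Lemma \ref{lemma-t1-5}), and the hypothesis $\dim V_{g}\le n$, together with the homothety lemma (Lemma \ref{lemma-t1-4}), is then needed to pin down all dimensions. You instead prove preservation of independence for every $k\le n$ directly by induction, using only stabilizers and transitivity of ${\rm GL}(V)$ --- notably without the hypothesis $\dim V_{g}\le n$ and without any permutation classification; your base case $k=2$ is sound (run the stabilizer argument for $(x,y)$, then again for $(w,x)$ with $w$ off $\langle x\rangle$, using that step one already forces $g(x)$ to be proportional to $g(w)$), and the inductive step via the pointwise stabilizer of $\langle x_{1},\dots,x_{k-1}\rangle$ is correct. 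This intermediate statement is strictly stronger than the paper's Lemma \ref{lemma-t1-5}, and consistently so: it also holds in Havlicek's Example \ref{exmp-t1-2}. For the endgame, the paper invokes the Faure--Fr\"olicher--Havlicek theorem (Theorem \ref{theoremFFH}) to produce a semilinear $l$ with $g(x)=a_{x}l(x)$ and then shows all $a_{x}$ coincide (Lemma \ref{lemma-t1-9}); you reconstruct $\sigma$ and semilinearity by hand (right-scalings for multiplicativity, transvections for additivity, the two computations of $g(e_{1}+e_{2}+e_{3})$ to normalize the stray scalar to $1$), i.e.\ you re-prove the special case of the fundamental theorem that is needed. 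Your route buys self-containedness and a sharper independence lemma; the paper's buys brevity given its Section 2 machinery, which is moreover reused for the projective Theorem \ref{theorem-main2}.

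Two points in your sketch need repair, though both are fillable with exactly your tools. First, the fixed-vector sub-lemma should be stated as: every vector of $V_{g}$ fixed by all $\rho(u)$, where $u$ fixes $x$ and preserves a complement $T$, lies in $\langle g(x)\rangle$. (Write such a vector in the basis $g(x),g(y_{2}),\dots,g(y_{n})$; permutations of the $y_{i}$ equalize the $T$-side coefficients, and the transvection $y_{2}\mapsto y_{2}+y_{3}$ kills them, since otherwise $g(y_{2}+y_{3})=g(y_{2})$ contradicts the case $k=2$.) Your formulation ``no nonzero fixed vector in $T_{g}$'' cannot be verified directly, because at that stage you do not yet know $g(x)\notin T_{g}$. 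Second, when you pass to $g(e_{1}+e_{2})$, swap-symmetry alone only equalizes the first two coefficients; for $n=3$ the coefficient along $e_{3}'$ cannot be removed by transvections inside the complement (it is a line), but the transvection $e_{3}\mapsto e_{1}+e_{3}$, which fixes $e_{1}+e_{2}$, does the job by the same appeal to the case $k=2$.
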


In Section 6 we present an example showing that 
the condition $\dim V_{g}\le n$ in Theorem \ref{theorem-main1} cannot be omitted.
The proof of Theorem \ref{theorem-main1} (Section 3) is based on the description of all 
finite subset $X\subset V$  such that every permutation on $X$ can be extended to a linear 
automorphism of $V$ (Section 2). 

\begin{rem}\label{rem}{\rm
If $g:V\to V'$ is a non-trivial GL-mapping then 
for every $u\in {\rm GL}(V)$ there is unique $\overline{u}\in {\rm GL}(V_{g})$ satisfying $gu={\overline u}g$  
and the mapping $u\to {\overline u}$ is a homomorphism of ${\rm GL}(V)$ to ${\rm GL}(V_{g})$
(Section 3).
If $\dim V=\dim V'\ge 2$, $R$ and $R'$ are of the same characteristic and
$R'$ is a finite-dimensional vector space over its center then
all homomorphisms of ${\rm SL}(V)$ to ${\rm GL}(V')$ are determined \cite{DH,Zha}.
Theorem \ref{theorem-main1}  follows from the results of \cite{DH,Zha} 
in the case when $\dim V_{g}=n$, $R$ and $R'$ are of the same characteristic and
$R'$ is a finite-dimensional vector space over its center.
}\end{rem}

Let ${\mathcal P}(V)$ be the projective space associated with $V$
--- the points are $1$-dimen\-sional subspaces of $V$ and the lines are defined by $2$-dimensional subspaces of $V$
(in the case when $n=2$, this is a projective line).
Every semilineaer injection $l:V\to V'$ induces the mapping
$$\pi(l):{\mathcal P}(V)\to {\mathcal P}(V')$$
which transfers every $1$-dimensional subspace $P\subset V$
to $P_{l}$. For every non-zero scalar $a\in R'$ we have $\pi(al)=\pi(l)$.
Conversely, if $s:V\to V'$ is a semilinear injection such that $\pi(l)=\pi(s)$
and the image of $l$ contains two linearly independent vectors then $s$ is a scalar multiple of $l$
\cite[Subsection 1.3.2]{Pankov1}.

Recall that ${\rm PGL}(V)$ is the group formed by all transformations of ${\mathcal P}(V)$ 
induced by linear automorphisms of $V$.
There is the natural group homomorphism 
$$\pi:{\rm GL}(V)\to{\rm PGL}(V)$$
whose kernel consists of all homothecies $x\to ax$ with $a$ belonging to the center of $R$.

For every strong semilinear embedding $l:V\to V'$ the mapping $f=\pi(l)$ satisfies the following condition:
\begin{enumerate}
\item[(PGL)] for every $h\in {\rm PGL}(V)$ there is $h'\in {\rm PGL}(V')$ such that $fh=h'f$.
\end{enumerate}
This easy follows from the fact that $l$ is a GL-mapping.
Also, this condition holds for  any constant mapping $f:{\mathcal P}(V)\to {\mathcal P}(V')$
(if $f(P)=P'$ for every $P\in {\mathcal P}(V)$ and $h\in {\rm PGL}(V)$ 
then we have $fh=h'f$ for any $h'\in {\rm PGL}(V')$ leaving fixed $P'$).

Every mapping $f:{\mathcal P}(V)\to {\mathcal P}(V')$ satisfying the condition (PGL)
will be called a PGL-{\it mapping}.

For every subset ${\mathcal X}\subset {\mathcal P}(V)$ the minimal subspace of $V$
containing every element of ${\mathcal X}$ is said to be {\it spanned} by ${\mathcal X}$.
For every mapping $f:{\mathcal P}(V)\to {\mathcal P}(V')$ and every subspace $S\subset V$
we denote by $S_{f}$ the subspace of $V'$ spanned by $f({\mathcal P}(S))$
and write $V_{f}$ for the subspace of $V'$ spanned by the image of $f$.

\begin{theorem}\label{theorem-main2}
Let  $n\ge 3$, $R'$ be a field and  $f:{\mathcal P}(V)\to {\mathcal P}(V')$ be a non-constant {\rm PGL}-mapping.
If $\dim V_{f}\le n$ then  $f$ is induced by a strong semilinear embedding.
\end{theorem}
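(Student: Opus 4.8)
\emph{The plan is to reduce Theorem \ref{theorem-main2} to Theorem \ref{theorem-main1}} by lifting the PGL-mapping $f$ to a GL-mapping. Concretely, I want to construct a non-trivial GL-mapping $g\colon V\to V'$ with $\langle g(x)\rangle=f(\langle x\rangle)$ for every non-zero $x$ and with $V_{g}\subseteq V_{f}$, so that $\dim V_{g}\le n$. Theorem \ref{theorem-main1} then shows that $g$ is a strong semilinear embedding, and since $\pi(g)(\langle x\rangle)=\langle g(x)\rangle=f(\langle x\rangle)$, this embedding induces $f$. Non-triviality of $g$ follows from the non-constancy of $f$ (two distinct values of $f$ force distinct values of $g$), and all the other hypotheses of Theorem \ref{theorem-main1} are immediate. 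Thus the whole problem becomes the construction of such a covering map $g$, and the field hypothesis on $R'$ will enter precisely there.

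First I would set up the projective analogue of the homomorphism from Remark \ref{rem}. Since $h$ is a bijection of ${\mathcal P}(V)$, for each $h\in\mathrm{PGL}(V)$ the transformation $h'$ provided by (PGL) permutes $f({\mathcal P}(V))$ and hence preserves $V_{f}$, so it restricts to some $\overline h\in\mathrm{PGL}(V_{f})$ with $fh=\overline h f$. The point to check is \emph{uniqueness} of $\overline h$: an element of $\mathrm{PGL}(V_{f})$ fixing every point of a projective frame is the identity, so $\overline h$ is determined as soon as $f({\mathcal P}(V))$ contains a frame of $V_{f}$. Here I would use that the image spans $V_{f}$ together with the transitivity of $\mathrm{PGL}(V)$ on suitably rich configurations to produce such a frame in the image. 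This gives a group homomorphism $\phi\colon\mathrm{PGL}(V)\to\mathrm{PGL}(V_{f})$, $h\mapsto\overline h$, and I would use $\Phi=\phi\circ\pi\colon\mathrm{GL}(V)\to\mathrm{PGL}(V_{f})$ as the organizing tool for the construction of $g$.

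The heart of the argument is to pass from $\Phi$ to the linear level. Because $R'$ is a field, the kernel of the natural map $\mathrm{GL}(V_{f})\to\mathrm{PGL}(V_{f})$ is exactly the group $R'^{\ast}\cdot\mathrm{id}$ of \emph{all} scalar transformations, which is central and abelian. Fixing a non-zero $x_{0}\in V$ and a representative $y_{0}$ of the point $f(\langle x_{0}\rangle)$, I would choose for each $u\in\mathrm{GL}(V)$ a linear lift $\widehat u\in\mathrm{GL}(V_{f})$ of $\Phi(u)$ and set $g(ux_{0})=\widehat u(y_{0})$ (and $g(0)=0$); as $u$ ranges over $\mathrm{GL}(V)$ the vector $ux_{0}$ ranges over all of $V\setminus\{0\}$, so this defines $g$ everywhere. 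By construction $\langle g(ux_{0})\rangle=\Phi(u)\langle y_{0}\rangle=f(\langle ux_{0}\rangle)$, so $g$ covers $f$ and $V_{g}\subseteq V_{f}$ automatically; the genuine content is to make the definition single-valued and the resulting $g$ a GL-mapping.

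\textbf{The main obstacle} is exactly this coherence of the lift. Writing, for fixed $v$, $g(vx)=\lambda_{v}(x)\,\widehat v\,g(x)$ with scalars $\lambda_{v}(x)\in R'^{\ast}$, the condition that $g$ be a GL-mapping is that each $\lambda_{v}$ be \emph{constant} in $x$ (then $v'=\lambda_{v}\widehat v$ witnesses (GL)); single-valuedness similarly requires the normalized lifts of the stabilizer of the vector $x_{0}$ to fix $y_{0}$. Both are scalar $2$-cocycle/normalization conditions, and I expect essentially all the work to lie in choosing the family $\{\widehat u\}$ so that they hold. This is precisely where commutativity of $R'$ is indispensable: over a field the scalar discrepancies are central and can be gathered into a single global factor for each group element, whereas over a non-commutative division ring they cannot be normalized away --- which is exactly why Theorem \ref{theorem-main2} is stated only for fields. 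Once $g$ has been built with these properties, Theorem \ref{theorem-main1} applies and finishes the proof.
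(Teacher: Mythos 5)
There is a genuine gap, and it sits exactly where you place ``essentially all the work'': the coherent choice of linear lifts $\widehat u$ is never carried out, and carrying it out is not a technicality --- it is equivalent to the theorem itself. Indeed, a GL-mapping $g$ with $\langle g(x)\rangle=f(\langle x\rangle)$ and $V_g\subseteq V_f$ exists \emph{if and only if} $f$ is induced by a strong semilinear embedding (one direction is your reduction via Theorem \ref{theorem-main1}; the other is immediate, taking $g$ to be the embedding). So the proposal reduces Theorem \ref{theorem-main2} to a statement of the same strength and then stops. Moreover, the mechanism you invoke to close the gap --- that over a field the scalar discrepancies are central and ``can be gathered into a single global factor for each group element'' --- is not a valid principle: centrality of the kernel does not make the relevant cocycle a coboundary. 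For example, the kernel $\mathbb{C}^{*}\cdot\mathrm{id}$ of $\mathrm{GL}_2(\mathbb{C})\to\mathrm{PGL}_2(\mathbb{C})$ is central, yet this extension admits no group-theoretic section, so even the identity homomorphism of $\mathrm{PGL}_2(\mathbb{C})$ cannot be lifted coherently to $\mathrm{GL}_2(\mathbb{C})$. Your construction needs less than a full section (only well-definedness on cosets of the stabilizer of $x_0$ and constancy of each $\lambda_v(\cdot)$), but no argument is given for either, and with arbitrary choices of $\widehat u$ both fail: writing $u_{vx}=vu_xw$ with $w$ in the stabilizer of $x_0$, one gets $g(vx)=\mu\lambda_w\,\widehat v\,g(x)$ with scalars $\mu,\lambda_w$ depending on $x$ in an uncontrolled way. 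There is also a secondary soft spot: the uniqueness of $\overline h$ via frames requires the image of $f$ to contain a projective frame of $V_f$ ($n+1$ points in general position), which does not follow merely from the image spanning $V_f$ and is asserted, not proved.

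For contrast, the paper never lifts to the linear level at all. It works projectively throughout: it proves an analogue of Lemma \ref{lemma-t1-1} for $\mathrm{PGL}$, then uses Proposition \ref{prop2} --- the classification of finite subsets of ${\mathcal P}(V_f)$ on which every permutation extends to an element of $\mathrm{PGL}(V_f)$, valid only over fields (this, via Lemma \ref{lemma-pr2} and the exclusion of harmonic subsets through Lemma \ref{lemma-harm}, is where the hypothesis that $R'$ is a field genuinely enters) --- to show that $f$ preserves independence and hence dimensions of subspaces. Then $f$ maps lines into lines with image not contained in a line, and the Faure--Fr\"olicher--Havlicek theorem (Theorem \ref{theoremFFH}) is applied directly to $f$ to produce the semilinear injection, which dimension counting shows to be strong. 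If you want to salvage your strategy, you would need an actual construction of the lift $g$, and the natural way to get one is to first prove $f$ is induced by a semilinear map --- i.e., to follow something like the paper's route anyway.
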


The proof (Section 5) is similar to the proof of Theorem \ref{theorem-main1}.
We can determine all finite subsets ${\mathcal X}\subset {\mathcal P}(V)$
such that every permutation on ${\mathcal X}$ is extendable to an element of ${\rm PGL}(V)$
only in the case when $R$ is field (Section 4).
By this reason, in Theorem \ref{theorem-main2} we require that $R'$ is field.

\section{Extendability of permutations on finite subsets of vector spaces}
Let $X$ be a finite subset of $V$ containing more than one vector.
Denote by $S(X)$ the group of all permutations on $X$.
We want to determine all cases when every element of $S(X)$ can be extended to a linear automorphism of $V$.
This is possible, for example, if $X$ is formed by linearly independent vectors.

\begin{exmp}\label{exmp-p1-1}{\rm
Suppose that $X$ consists of linearly independent vectors $x_{1},\dots,x_{m}$ and
the vector
$$x_{m+1}=-(x_{1}+\dots+x_{m}).$$
For every $i\in \{1,\dots m-1\}$ we take any linear automorphism $u_{i}\in {\rm GL}(V)$ such that
$$u_{i}(x_{i})=x_{i+1},\;u_{i}(x_{i+1})=x_{i}\;\mbox{ and }\;u_{i}(x_{j})=x_{j}\;\mbox{ if }\;j\ne i,i+1,m+1.$$
Every $u_i$ sends $x_{m+1}$ to itself.
Consider a linear automorphism $v\in {\rm GL}(V)$ leaving fixed every $x_{i}$ for $i\le m-1$ and
transferring $x_{m}$ to $x_{m+1}$.
Then
$$v(x_{m+1})=-(v(x_{1})+\dots+v(x_{m}))=-(x_{1}+\dots+x_{m-1}-x_{1}-\dots-x_{m-1}-x_{m})=x_{m}.$$
So, all transpositions of type $(x_{i},x_{i+1})$ can be extended to linear automorphisms of $V$.
Since $S(X)$ is spanned by these transpositions,
every permutation on $X$ is extendable to a linear automorphism of $V$.
}\end{exmp}

\begin{prop}\label{prop1}
If every permutation on $X$ can be extended to a linear automorphism of $V$
then $X$ is formed by linearly independent vectors or it consists of
$$x_{1},\dots,x_{m},-(x_{1}+\dots+x_{m}),$$
where $x_{1},\dots,x_{m}$ are linearly independent.
\end{prop}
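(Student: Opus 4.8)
The plan is to exploit that a linear automorphism preserves linear independence, so the hypothesis forces the full symmetric group $S(X)$ to act on $X$ by symmetries of its linear dependence structure. Two preliminary reductions come first. Since every linear map fixes $0$, no permutation interchanging $0$ with a nonzero vector can extend to a linear automorphism; hence $0\notin X$. Next, let $r$ be the dimension of the subspace spanned by $X$ and put $k=|X|$. Because $S(X)$ acts transitively on the $r$-element subsets of $X$ and at least one such subset is a basis of the span, the image of that basis under an automorphism extending any permutation is again independent; therefore every $r$-element subset of $X$ is a basis. If $k=r$ this says $X$ is linearly independent and we are in the first case, so from now on I assume $k>r$.

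I would then fix a basis $x_{1},\dots,x_{r}\subset X$ of the span and choose any further vector $y\in X$. Writing $y=a_{1}x_{1}+\dots+a_{r}x_{r}$, the uniformity just established forces every $a_{i}\ne 0$ (otherwise $y$ together with $r-1$ of the $x_{i}$ would be a dependent $r$-subset). Applying the automorphism that extends the transposition of $x_{i}$ and $x_{j}$, which fixes $y$, and comparing against the unique expansion of $y$ in the basis shows $a_{i}=a_{j}$; hence all coefficients equal a single scalar $a$ and $y=a(x_{1}+\dots+x_{r})$.

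The decisive computation pins down $a$. Let $w$ extend the transposition of $x_{r}$ and $y$, so $w(x_{r})=y$, $w(y)=x_{r}$, and $w(x_{i})=x_{i}$ for $i<r$. Applying $w$ to $y=a(x_{1}+\dots+x_{r})$ gives $x_{r}=a(x_{1}+\dots+x_{r-1})+ay$; substituting $y=a(x_{1}+\dots+x_{r})$ and comparing coefficients in the basis $x_{1},\dots,x_{r}$ yields $a^{2}=1$ together with $a+a^{2}=0$, whence $a=-1$. (For $r=1$ the second relation is vacuous, and one instead rules out $a=1$ using that the vectors of $X$ are distinct.) Thus $y=-(x_{1}+\dots+x_{r})$. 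Since this is forced for every $y\in X\setminus\{x_{1},\dots,x_{r}\}$ and two distinct vectors cannot both equal $-(x_{1}+\dots+x_{r})$, there is exactly one such $y$; hence $k=r+1$ and $X$ has the asserted form with $m=r$.

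The only delicate point is the coefficient comparison over a possibly noncommutative $R$. It is legitimate because $V$ is a left vector space, so the extending automorphisms satisfy $u(ax)=au(x)$; because coordinates relative to a basis are unique; and because the scalars $\pm 1$ that finally appear are central, so that the factorization $(a-1)(a+1)=a^{2}-1$ combined with the absence of zero divisors delivers $a=-1$. I expect this bookkeeping, rather than any conceptual difficulty, to be the main thing to get right.
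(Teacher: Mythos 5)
Your proof is correct and follows essentially the same route as the paper's: fix a maximal independent subset of $X$, use extensions of transpositions of basis vectors (which fix the remaining vector) to equalize its coefficients, then apply an extension of the transposition swapping a basis vector with that vector to force the common coefficient to satisfy $a^{2}=1$ and $a+a^{2}=0$, hence $a=-1$ and $|X|=m+1$. Your preliminary normalizations ($0\notin X$, every $r$-element subset of $X$ is a basis) and the explicit treatment of the case $r=1$ are minor refinements of the same argument, the last of which actually patches a small gap the paper leaves open.
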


\begin{proof}
Let $x_{1},\dots,x_{k}$ be the elements of $X$.
Suppose that these vectors are not linearly independent
and consider any maximal collection of linearly independent vectors from $X$.
We can assume that this collection is formed by $x_{1},\dots,x_{m}$, $m<k$.
Then every $x_{p}$ with $p>m$ is a linear combination of $x_{1},\dots,x_{m}$, i.e.
$x_{p}=\sum^{m}_{l=1}a_{l}x_{l}$.
Let $u\in {\rm GL}(V)$ be an extension of the transposition $(x_{i},x_{j})$, $i,j\le m$.
Then
$$u(x_{i})=x_{j},\;u(x_{j})=x_{i}\;\mbox{ and }\;u(x_{l})=x_{l}\;\mbox{ if }\;l\ne i,j.$$
We have
$$\sum^{m}_{l=1}a_{l}x_{l}=x_{p}=u(x_{p})=\sum^{m}_{l=1}b_{l}x_{l},\;\mbox{ where }\;
b_{i}=a_{j},\;b_{j}=a_{i}\;\mbox{ and }\;b_{l}=a_{l}\;\mbox{ if }\;l\ne i,j.$$
Since $x_{1},\dots,x_{m}$ are linearly independent, the latter means that $a_{i}=a_{j}$.
This equality holds for any $i,j\le m$ and we have
$$x_{p}=a(x_{1}+\dots+x_{m})$$
for some non-zero scalar $a\in R$.
Let $v\in {\rm GL}(V)$ be an extension of the transposition $(x_{1},x_{p})$.
Then
$$v(x_{1})=x_{p},\;v(x_{p})=x_{1}\;\mbox{ and }\;v(x_{i})=x_{i}\;\mbox{ if }\;i\ne 1,p$$
We have
$$x_{1}=v(x_{p})=a(v(x_{1})+\dots+v(x_{m}))=a(x_{p}+x_{2}+\dots+x_{m})=$$
$$=a^{2}(x_{1}+\dots+ x_{m})+a(x_{2}+\dots+x_{m})=a^{2}x_{1}+(a^{2}+a)(x_{2}+\dots+x_{m}).$$
Hence $a^{2}=1$ and $a^{2}+a=0$ which implies that $a=-1$ and
$$x_{p}=-(x_{1}+\dots+x_{m}).$$
This equality holds for every $p>m$.
Therefore, $k=m+1$ and the second possibility is realized.
\end{proof}

\section{Proof of Theorem \ref{theorem-main1}}
Let $n\ge 2$ and let $g:V\to V'$ be a non-trivial GL-mapping.

\begin{lemma}\label{lemma-t1-1}
The following assertions are fulfilled:
\begin{enumerate}
\item[{\rm (1)}] for every $u\in {\rm GL}(V)$ there exists unique $\overline{u}\in {\rm GL}(V_{g})$ such that
$gu={\overline u}g$,
\item[{\rm (2)}] the mapping $u\to \overline{u}$ is a homomorphism of ${\rm GL}(V)$ to ${\rm GL}(V_{g})$.
\end{enumerate}
\end{lemma}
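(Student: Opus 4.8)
The plan is to extract the required automorphism of $V_{g}$ directly from the automorphism of $V'$ furnished by condition (GL), and then to deduce every assertion from the single fact that $g(V)$ spans $V_{g}$ together with linearity. First I would fix $u\in {\rm GL}(V)$ and choose, by (GL), some $u'\in {\rm GL}(V')$ with $gu=u'g$. For every $x\in V$ we then have $u'(g(x))=g(u(x))\in g(V)\subseteq V_{g}$, so $u'$ carries the spanning set $g(V)$ into $V_{g}$; since $u'$ is linear and $V_{g}$ is spanned by $g(V)$, this yields $u'(V_{g})\subseteq V_{g}$.

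The one step requiring a moment's care is that this inclusion is an equality, for which I would use only that $u$ is invertible on $V$ (not a second application of (GL)): given arbitrary $x\in V$, writing $x=u(u^{-1}x)$ gives $g(x)=u'(g(u^{-1}x))\in u'(V_{g})$, so $g(V)\subseteq u'(V_{g})$ and hence $V_{g}\subseteq u'(V_{g})$. Thus $u'(V_{g})=V_{g}$, and the restriction $\overline{u}:=u'|_{V_{g}}$ is a linear bijection of $V_{g}$, that is, an element of ${\rm GL}(V_{g})$ satisfying $gu=\overline{u}g$. For uniqueness I would observe that if $\overline{u}_{1},\overline{u}_{2}\in {\rm GL}(V_{g})$ both satisfy this equation then they agree on $g(V)$, and since $g(V)$ spans $V_{g}$ and both maps are linear, they coincide on all of $V_{g}$. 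This settles assertion (1); note that non-triviality of $g$ plays no role here and is needed only in the later arguments.

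Assertion (2) then follows formally from the uniqueness in (1). Given $u,v\in {\rm GL}(V)$, the composite $\overline{u}\,\overline{v}$ lies in ${\rm GL}(V_{g})$ and satisfies $g(uv)=(\overline{u}g)v=\overline{u}(gv)=\overline{u}\,\overline{v}\,g$, so uniqueness forces $\overline{uv}=\overline{u}\,\overline{v}$. Applying the same uniqueness to ${\rm id}_{V}$ gives $\overline{{\rm id}_{V}}={\rm id}_{V_{g}}$, and therefore $u\mapsto \overline{u}$ is a homomorphism of ${\rm GL}(V)$ to ${\rm GL}(V_{g})$.
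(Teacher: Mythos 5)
Your proof is correct and takes essentially the same approach as the paper: restrict the $u'\in{\rm GL}(V')$ furnished by (GL) to $V_{g}$ using the fact that $g(V)$ spans $V_{g}$, obtain uniqueness because two linear maps agreeing on a spanning set coincide, and deduce the homomorphism property formally from uniqueness. You are in fact slightly more careful than the paper, which only verifies $u'(g(V))\subseteq g(V)$ and leaves the surjectivity of $u'|_{V_{g}}$ implicit, whereas you prove $u'(V_{g})=V_{g}$ via the invertibility of $u$ --- a worthwhile addition, since at this stage $V_{g}$ may be infinite-dimensional and injectivity alone would not suffice.
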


\begin{proof}
(1). Let $u\in {\rm GL}(V)$.
We take any  $u'\in {\rm GL}(V')$ satisfying $gu=u'g$.
For every $y\in g(V)$ there exists $x\in V$ such that $y=g(x)$ and we have
$$u'(y)=u'g(x)=gu(x)\in g(V).$$
Thus $u'$ transfers $g(V)$ to itself.
This means that $V_{g}$ is invariant for $u'$, since it is spanned by $g(V)$.

Suppose that $u''\in {\rm GL}(V')$ satisfies the condition $gu=u''g$.
If $y=g(x)$, $x\in V$ then
$$u'(y)=u'g(x)=gu(x)=u''g(x)=u''(y).$$
Therefore, $u'|_{g(V)}=u''|_{g(V)}$ which implies that $u'|_{V_{g}}=u''|_{V_{g}}$.

(2). If $u,v\in {\rm GL}(V)$ then
$$\overline{uv}g=guv={\overline u}gv={\overline u}\,{\overline v}g$$
and ${\overline u}\,{\overline v}=\overline{uv}$ by the first statement.
\end{proof}

\begin{lemma}\label{lemma-t1-2}
If $x,y\in V\setminus\{0\}$ are linearly independent then $g(x)\ne g(y)$.
\end{lemma}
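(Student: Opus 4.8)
The plan is to argue by contradiction, exploiting that $\mathrm{GL}(V)$ acts transitively on ordered pairs of linearly independent vectors together with the intertwining relation from Lemma \ref{lemma-t1-1}. Suppose there exist linearly independent $x,y$ with $g(x)=g(y)$. For an arbitrary $u\in\mathrm{GL}(V)$, Lemma \ref{lemma-t1-1} supplies $\overline{u}\in\mathrm{GL}(V_{g})$ satisfying $gu=\overline{u}g$, whence
$$g(u(x))=\overline{u}(g(x))=\overline{u}(g(y))=g(u(y)).$$
Thus the single coincidence $g(x)=g(y)$ is propagated by every element of $\mathrm{GL}(V)$.

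First I would record the transitivity. Since $n\ge 2$, for any linearly independent pair $a,b\in V$ there is $u\in\mathrm{GL}(V)$ with $u(x)=a$ and $u(y)=b$: extend $x,y$ to a base of $V$, extend $a,b$ to a base of $V$, and let $u$ be the linear automorphism sending the first base to the second. Combined with the displayed equality, this yields $g(a)=g(b)$ for every pair of linearly independent vectors.

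Next I would upgrade this to constancy on $V\setminus\{0\}$. Two linearly independent vectors already have equal images by the previous step; for $v,w\in V\setminus\{0\}$ that are linearly dependent, I would choose (again using $n\ge 2$) a vector $t$ linearly independent from $v$, observe that $t$ is then also linearly independent from $w$, and conclude $g(v)=g(t)=g(w)$. Hence $g$ is constant on $V\setminus\{0\}$, contradicting the assumption that $g$ is a non-trivial GL-mapping, and the lemma follows.

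The argument is short, and the step most in need of care is the transitivity claim, where one must use linear independence of \emph{both} the source pair $x,y$ and the target pair $a,b$ to guarantee the intertwining $u$; the hypothesis $n\ge 2$ is the only ingredient that drives the proof, entering once to produce this $u$ and once more to handle the linearly dependent case.
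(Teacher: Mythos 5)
Your proof is correct and follows essentially the same route as the paper: propagate the coincidence $g(x)=g(y)$ through the intertwining relation $gu=\overline{u}g$, use the $2$-transitivity of ${\rm GL}(V)$ on linearly independent pairs to conclude that $g$ is constant on $V\setminus\{0\}$, and contradict non-triviality. The only cosmetic difference is that the paper fixes $x$ and moves $y$ to an arbitrary $z$ (splitting into the cases $z$ independent of $x$ or a scalar multiple of $x$), whereas you first establish $g(a)=g(b)$ for all independent pairs and then handle dependent pairs via an auxiliary vector $t$; both arguments are the same in substance.
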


\begin{proof}
Suppose that $g(x)=g(y)$ for linearly independent vectors $x,y\in V\setminus\{0\}$.
Then
$$gu(x)={\overline u}g(x)={\overline u}g(y)=gu(y)\;\;\;\;\;\forall\;u\in {\rm GL}(V).$$
Let $z\in V\setminus\{0\}$. In the case when $x,z$ are linearly independent,
we take any $u\in {\rm GL}(V)$ such that $u(x)=x$ and $u(y)=z$. Then
$$g(x)=gu(x)=gu(y)=g(z).$$
If $z$ is a scalar multiple of $x$ then $y,z$ are linearly independent
and the same arguments show that $g(y)=g(z)$.
Thus $g|_{V\setminus\{0\}}$ is constant which contradicts our assumption.
\end{proof}

\begin{lemma}\label{lemma-t1-3}
For every $u\in {\rm GL}(V)$ the following assertions are fulfilled:
\begin{enumerate}
\item[{\rm (1)}] if a subspace $S\subset V$ is invariant for $u$ then $S_{g}$ is invariant for $\overline{u}$,
\item[{\rm (2)}]  $({\rm Ker}({\rm id}_{V}-u))_{g}\subset {\rm Ker}({\rm id}_{V_{g}}-{\overline u})$.
\end{enumerate}
\end{lemma}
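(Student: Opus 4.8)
The plan is to derive both assertions directly from the intertwining relation $gu=\overline{u}g$ supplied by Lemma \ref{lemma-t1-1}(1), together with the elementary observation that a linear map which carries a spanning set of a subspace into a subspace carries the whole subspace there as well.

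For assertion (1), suppose $S\subset V$ is invariant for $u$, so that $u(S)\subset S$. For every $x\in S$ I would compute
$$\overline{u}g(x)=gu(x)\in g(S)\subset S_{g},$$
where the first equality is the defining relation for $\overline{u}$ and the inclusion uses the invariance of $S$. Thus $\overline{u}$ sends the generating set $g(S)$ of $S_{g}$ into $S_{g}$. Since $\overline{u}$ is linear and $S_{g}$ is a subspace of $V_{g}$, the image $\overline{u}(S_{g})$, being spanned by $\overline{u}(g(S))\subset S_{g}$, is contained in $S_{g}$. Hence $S_{g}$ is invariant for $\overline{u}$.

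For assertion (2), write $F={\rm Ker}({\rm id}_{V}-u)$ for the fixed subspace of $u$. For each $x\in F$ we have $u(x)=x$, and therefore
$$\overline{u}g(x)=gu(x)=g(x),$$
so $g(x)\in {\rm Ker}({\rm id}_{V_{g}}-\overline{u})$. This kernel is a subspace of $V_{g}$, and it contains the generating set $g(F)$ of $F_{g}$; consequently it contains the span $F_{g}$, which is exactly the desired inclusion $F_{g}\subset {\rm Ker}({\rm id}_{V_{g}}-\overline{u})$.

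Both steps are short and rest entirely on Lemma \ref{lemma-t1-1}, so there is no serious obstacle here. The only point requiring a moment of care is the passage from the generating set to its span: one must invoke the linearity of $\overline{u}$ and the fact that the relevant targets---namely $S_{g}$ in part (1) and the fixed subspace of $\overline{u}$ in part (2)---are genuinely subspaces of $V_{g}$, so that containment of the generators forces containment of everything they span.
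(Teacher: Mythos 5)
Your proof is correct and is essentially the paper's own argument: both parts follow from the intertwining relation $gu=\overline{u}g$ of Lemma \ref{lemma-t1-1}, applied to the spanning sets $g(S)$ and $g({\rm Ker}({\rm id}_{V}-u))$, followed by passing from generators to their span using linearity of $\overline{u}$. The paper's proof is identical in substance, merely stated more tersely.
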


\begin{proof}
(1).
Let $S$ be a subspace of $V$ invariant for $u$.
If $y\in g(S)$ then $y=g(x)$, $x\in S$ and
$$\overline{u}(y)=\overline{u}g(x)=gu(x)\in g(S).$$
Therefore, ${\overline u}$ transfers $g(S)$ to itself.
This means that $S_{g}$ is invariant for $\overline{u}$, since it is spanned by $g(S)$.

(2). If $u(x)=x$ then $\overline{u}g(x)=gu(x)=g(x)$.
Thus
$$g({\rm Ker}({\rm id}_{V}-u))\subset {\rm Ker}({\rm id}_{V_{g}}-{\overline u})$$
which implies the required equality.
\end{proof}

\begin{lemma}\label{lemma-t1-4}
If ${\overline u}$ is identity then $u$ is identity or  a homothety.
\end{lemma}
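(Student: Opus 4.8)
The plan is to first translate the hypothesis $\overline{u}={\rm id}_{V_{g}}$ into a direct statement relating $g$ and $u$. Since $\overline{u}$ is by Lemma \ref{lemma-t1-1} the unique element of ${\rm GL}(V_{g})$ satisfying $gu=\overline{u}g$, the assumption $\overline{u}={\rm id}_{V_{g}}$ is equivalent to $gu=g$, i.e. $g(u(x))=g(x)$ for every $x\in V$.

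Next I would show that $u$ preserves every line through the origin. Fix a non-zero vector $x$; then $u(x)$ is non-zero and $g(u(x))=g(x)$. Reading Lemma \ref{lemma-t1-2} in contrapositive form, two non-zero vectors with the same $g$-image cannot be linearly independent, so $u(x)$ and $x$ are linearly dependent. Hence there is a non-zero scalar $\lambda_{x}\in R$ with $u(x)=\lambda_{x}x$, and this holds for every non-zero $x$.

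It then remains to prove that $\lambda_{x}$ is independent of $x$. For two linearly independent vectors $x,y$ the additivity of $u$ gives $\lambda_{x+y}(x+y)=\lambda_{x}x+\lambda_{y}y$, and comparing coefficients with respect to the independent pair $x,y$ forces $\lambda_{x}=\lambda_{x+y}=\lambda_{y}$. Using $n\ge 2$, any two non-zero vectors that happen to lie on the same line can be compared through a third vector chosen off that line, so all the scalars $\lambda_{x}$ coincide with one fixed $\lambda\in R$. Consequently $u$ is the map $x\to\lambda x$; since $u$ is linear, $\lambda$ must commute with every scalar and hence lie in the center of $R$, so $u$ is a homothety, the identity precisely when $\lambda=1$.

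The main obstacle is this final step over a possibly non-commutative division ring: when $x$ and $y$ are linearly dependent, say $y=ax$, the relations $u(x)=\lambda_{x}x$ and $u(y)=\lambda_{y}y$ only yield $\lambda_{y}=a\lambda_{x}a^{-1}$, so one cannot directly conclude $\lambda_{x}=\lambda_{y}$. The difficulty is resolved exactly by routing the comparison through a vector linearly independent from both $x$ and $y$, which is available because $\dim V\ge 2$.
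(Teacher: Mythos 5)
Your proof is correct and follows essentially the same route as the paper: translate $\overline{u}={\rm id}_{V_{g}}$ into $gu=g$, invoke Lemma \ref{lemma-t1-2} in contrapositive form to get $u(x)$ proportional to $x$ for every non-zero $x$, and conclude that $u$ is a homothety. The only difference is that you spell out the final standard fact (a linear automorphism preserving every $1$-dimensional subspace is $x\to\lambda x$ with $\lambda$ central), which the paper simply asserts; your careful treatment of the non-commutative case via a third vector off the line is exactly the right way to fill that in.
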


\begin{proof}
If ${\overline u}$ is identity then
$$gu(x)=\overline{u}g(x)=g(x)\;\;\;\;\;\forall\;x\in V.$$
If $x\in V\setminus\{0\}$ and $u(x)$ are linearly independent then, by Lemma \ref{lemma-t1-2},
$gu(x)\ne g(x)$ which contradicts to the latter equality.
Thus $u(x)$ is a scalar multiple of $x$ for every $x\in V\setminus\{0\}$
which is possible only in the case when $u$ is identity or  a homothety.
\end{proof}

We use Proposition \ref{prop1} to  prove the following.

\begin{lemma}\label{lemma-t1-5}
$g(x)\ne 0$ for every $x\in V\setminus\{0\}$
and $g$ transfers any $n-1$ linearly independent vectors to $n-1$ linearly independent vectors.
\end{lemma}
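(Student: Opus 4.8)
The plan is to prove the two assertions separately: the non-vanishing by a transitivity argument, and the statement about $n-1$ linearly independent vectors via Proposition \ref{prop1}. First I would establish that $g(x)\ne 0$ for $x\ne 0$. Suppose, to the contrary, that $g(x_{0})=0$ for some $x_{0}\in V\setminus\{0\}$. Each $\overline{u}$ is linear, so $\overline{u}(0)=0$, and Lemma \ref{lemma-t1-1} gives $gu(x_{0})=\overline{u}g(x_{0})=0$ for every $u\in {\rm GL}(V)$. Since ${\rm GL}(V)$ acts transitively on $V\setminus\{0\}$, the vectors $u(x_{0})$ exhaust $V\setminus\{0\}$ as $u$ ranges over ${\rm GL}(V)$; hence $g$ vanishes on all of $V\setminus\{0\}$, contradicting the non-triviality of $g$.

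For the second assertion I would pass to a basis. Let $x_{1},\dots,x_{n-1}$ be linearly independent and extend them to a basis $x_{1},\dots,x_{n}$ of $V$. By Lemma \ref{lemma-t1-2} the images $g(x_{1}),\dots,g(x_{n})$ are pairwise distinct, so the set $X'=\{g(x_{1}),\dots,g(x_{n})\}$ has exactly $n$ elements. The crucial observation is that every permutation of $X'$ extends to a linear automorphism of $V_{g}$: given a permutation corresponding to a bijection $\tau$ of $\{1,\dots,n\}$, pick $u\in {\rm GL}(V)$ with $u(x_{i})=x_{\tau(i)}$ (such $u$ exists since the $x_{i}$ form a basis), and then Lemma \ref{lemma-t1-1} yields $\overline{u}(g(x_{i}))=gu(x_{i})=g(x_{\tau(i)})$, so $\overline{u}\in {\rm GL}(V_{g})$ realizes exactly this permutation of $X'$.

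Since $n\ge 2$, the set $X'$ has more than one element, and I would now apply Proposition \ref{prop1} to $X'$ inside the vector space $V_{g}$. It leaves two possibilities: either $g(x_{1}),\dots,g(x_{n})$ are linearly independent, or, after relabeling, $X'$ consists of linearly independent vectors $z_{1},\dots,z_{n-1}$ together with $-(z_{1}+\dots+z_{n-1})$. A short check shows that in the second case every $n-1$ of the elements of $X'$ are still linearly independent: deleting $-(z_{1}+\dots+z_{n-1})$ leaves the independent family $z_{1},\dots,z_{n-1}$, and deleting some $z_{j}$ leaves a family that is readily seen to be independent as well. Hence in both cases any $n-1$ of $g(x_{1}),\dots,g(x_{n})$ are linearly independent; in particular $g(x_{1}),\dots,g(x_{n-1})$ are, which is what we want.

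The only step deserving real care is the verification that permutations of $X'$ extend to ${\rm GL}(V_{g})$, which relies simultaneously on the distinctness furnished by Lemma \ref{lemma-t1-2} and on the homomorphism property of $u\mapsto\overline{u}$ in Lemma \ref{lemma-t1-1}; everything else is routine bookkeeping, so I do not anticipate a serious obstacle. It is worth noting that the hypothesis $\dim V_{g}\le n$ plays no role in this lemma: it will be needed only later, to upgrade the $(n-1)$-independence proved here to the full strong embedding property.
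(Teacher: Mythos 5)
Your proof is correct, and its core is the same as the paper's: extend the $n-1$ independent vectors to a basis, use Lemma \ref{lemma-t1-2} to see that $g$ is injective on that basis, transport permutations of the image set back to ${\rm GL}(V)$ via $u\mapsto\overline{u}$, and invoke Proposition \ref{prop1} to conclude that any $n-1$ of the $n$ image vectors are independent. The one genuine divergence is the non-vanishing claim: the paper gets $g(x)\ne 0$ as a corollary of the independence statement (every non-zero vector lies in a set of $n-1$ independent vectors, and members of an independent family are non-zero), whereas you prove it first and separately, by transitivity of the ${\rm GL}(V)$-action on $V\setminus\{0\}$ together with $\overline{u}(0)=0$. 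Both arguments are valid; yours is self-contained and slightly more direct, the paper's is more economical since it reuses the main computation. Two trivial remarks: your argument only needs part (1) of Lemma \ref{lemma-t1-1} (existence of $\overline{u}$ with $gu=\overline{u}g$), not the homomorphism property you cite; and your closing observation that the hypothesis $\dim V_{g}\le n$ is not used here agrees with the paper, which introduces that hypothesis only after this lemma.
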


\begin{proof}
Let $X\subset V$ be a subset consisting of $n-1$ linearly independent vectors
and let $B$ be a base of $V$ containing $X$.
Lemma \ref{lemma-t1-2} guarantees that $g|_{B}$ is a bijection to $B':=g(B)$.

For every permutation $s'$ on $B'$ there is a permutation $s$ on $B$ satisfying $gs=s'g|_{B}$.
Let $u\in {\rm GL}(V)$ be an extension of $s$
(since $B$ is a base of $V$, every permutation on $B$ can be extended to a linear automorphism of $V$).
If $y\in B'$ then $y=g(x)$, $x\in B$ and
$${\overline u}(y)={\overline u}g(x)=gu(x)=gs(x)=s'g(x)=s'(y),$$
i.e. ${\overline u}|_{B'}=s'$.
So, every permutation on $B'$ can be extended to a linear automorphism of $V_{g}$.

By Proposition \ref{prop1},
$B'$ is formed by $n$ linearly independent vectors or
$$B'=\{y_{1},\dots,y_{n-1},-(y_{1}+\dots+y_{n-1})\},$$
where $y_{1},\dots,y_{n-1}$ are linearly independent.
In each of these cases, any $n-1$ vectors of $B'$ are linearly independent.
This implies that $g(X)$ is formed by $n-1$ linearly independent vectors.

Since every non-zero vector of $V$ is contained in a subset formed by $n-1$ linearly independent vectors,
we have $g(x)\ne 0$ for every $x\in V\setminus\{0\}$.
\end{proof}

From this moment we suppose that $\dim V_{g}\le n$.

\begin{lemma}\label{lemma-t1-6}
$\dim V_{g}=n$ and for every $(n-1)$-dimensional subspace $S\subset V$
the subspace $S_{g}$ is $(n-1)$-dimensional.
\end{lemma}

\begin{proof}
Let $S$ be an $(n-1)$-dimensional subspace of $V$.
We take any $u\in {\rm GL}(V)$ such that
\begin{equation}\label{eq-t1-1}
{\rm Ker}({\rm id}_{V}-u)=S.
\end{equation}
By the second part of Lemma \ref{lemma-t1-3}, we have
\begin{equation}\label{eq-t1-2}
S_{g}\subset {\rm Ker}({\rm id}_{V_{g}}-\overline{u}).
\end{equation}
Lemma \ref{lemma-t1-5} implies that $\dim S_{g}\ge n-1$.
If $S_{g}$ is $n$-dimensional then
$$\dim({\rm Ker}({\rm id}_{V_{g}}-\overline{u}))\ge n$$
by \eqref{eq-t1-2}.
Since $\dim V_{g}\le n$,
the kernel of ${\rm id}_{V_{g}}-\overline{u}$ coincides with $V_{g}$ which means that $\overline{u}$ is identity.
Then $u$ is identity or  a homothety (Lemma \ref{lemma-t1-4})
which contradicts to \eqref{eq-t1-1}.
Therefore, $S_{g}$ is $(n-1)$-dimensional.
If $\dim V_{g}<n$ then $V_{g}=S_{g}$ and \eqref{eq-t1-2} implies that $\overline{u}$ is identity
which is impossible.
\end{proof}

\begin{lemma}\label{lemma-t1-7}
$\dim S_{g}=\dim S$ for every non-zero subspace $S\subset V$.
\end{lemma}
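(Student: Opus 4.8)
The plan is to argue by downward induction on $\dim S$. Lemma \ref{lemma-t1-6} already provides the base cases $\dim S=n$ and $\dim S=n-1$. For the inductive step I would assume the equality $\dim T_{g}=\dim T$ for every subspace $T$ of dimension $k+1$, where $1\le k\le n-2$, and deduce it for an arbitrary $k$-dimensional subspace $S$.

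For the lower bound I would choose linearly independent vectors $e_{1},\dots,e_{k}$ spanning $S$ and, since $k\le n-1$, extend them to $n-1$ linearly independent vectors of $V$. By Lemma \ref{lemma-t1-5} their images are linearly independent, so $g(e_{1}),\dots,g(e_{k})$ are linearly independent and $\dim S_{g}\ge k$. Next I would pick $x\notin S$ and set $T=S+\langle x\rangle$, so that $\dim T=k+1$ and, by the induction hypothesis, $\dim T_{g}=k+1$. Since $S\subset T$ we have $S_{g}\subseteq T_{g}$, whence $\dim S_{g}\in\{k,k+1\}$, and it remains to exclude $\dim S_{g}=k+1$, which would force $S_{g}=T_{g}$ and in particular $g(x)\in S_{g}$.

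The key step is to derive a contradiction from $g(x)\in S_{g}$. Fix any $s\in S$, choose a hyperplane $H\supseteq S$ with $x\notin H$, and let $u\in {\rm GL}(V)$ be the transvection fixing $H$ pointwise and sending $x$ to $x+s$. Then $S\subseteq {\rm Ker}({\rm id}_{V}-u)$, so by Lemma \ref{lemma-t1-3}(2) the map $\overline{u}$ fixes $S_{g}$ pointwise; as $g(x)\in S_{g}$ we obtain $g(x+s)=gu(x)=\overline{u}g(x)=g(x)$. Since this holds for every $s\in S$ and $\dim S\ge 1$, I can choose $s_{1}\ne s_{2}$ in $S$ and get $g(x+s_{1})=g(x)=g(x+s_{2})$. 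But $x+s_{1}$ and $x+s_{2}$ are linearly independent, so Lemma \ref{lemma-t1-2} is violated. Hence $\dim S_{g}=k$, completing the induction.

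The main obstacle is that at this stage $g$ is not yet known to be semilinear, so none of its values on sums or scalar multiples can be computed directly; everything must be extracted from the intertwining relation $gu=\overline{u}g$ together with the injectivity statement of Lemma \ref{lemma-t1-2}. The crux is therefore the choice of the transvection $u$, which converts the hypothesis $g(x)\in S_{g}$ into the collapse $g(x+s)=g(x)$ on the whole coset $x+S$ and thereby contradicts Lemma \ref{lemma-t1-2}.
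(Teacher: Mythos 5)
Your proof is correct, but it takes a genuinely different route through the inductive step than the paper does. The paper's proof restricts $g$ to an $(n-1)$-dimensional subspace $U\subset V$: using the first part of Lemma \ref{lemma-t1-3} it observes that $g|_{U}$ is again a GL-mapping, non-trivial by Lemma \ref{lemma-t1-2}, with $\dim U_{g}=n-1$ by Lemma \ref{lemma-t1-6}; it then re-runs the whole machinery (in particular the argument of Lemma \ref{lemma-t1-6}, which rests on Lemma \ref{lemma-t1-4} and, further back, on Proposition \ref{prop1}) inside $U$, descending through the dimensions ``step by step.'' You instead stay in the ambient space $V$ throughout: the lower bound $\dim S_{g}\ge\dim S$ comes from Lemma \ref{lemma-t1-5}, and the upper bound comes from showing $g(x)\notin S_{g}$ for $x\notin S$, via a transvection $u$ fixing a hyperplane $H\supseteq S$ pointwise, the second part of Lemma \ref{lemma-t1-3}, and the resulting collapse $g(x+s)=g(x)$ on the coset $x+S$, which contradicts Lemma \ref{lemma-t1-2}. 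The two contradiction mechanisms are different: the paper's is the dichotomy ``$\overline{u}=\mathrm{id}$ implies $u$ is the identity or a homothety'' (Lemma \ref{lemma-t1-4}), while yours is injectivity of $g$ on linearly independent pairs. What your version buys is a self-contained induction that never has to verify that restrictions of $g$ are GL-mappings and never reuses Lemma \ref{lemma-t1-4} or Proposition \ref{prop1} at lower dimensions; it also establishes the stronger intermediate fact that $g^{-1}(S_{g})\subseteq S$ for every proper subspace $S$, which is exactly the ``strong embedding'' behaviour one is aiming for. What the paper's version buys is brevity: having set up the restriction formalism, the inductive step is a one-line appeal to results already proved.
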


\begin{proof}
If $n=2$ then the statement follows from the previous lemma.
Suppose that $n\ge 3$ and consider any $(n-1)$-dimensional subspace $U\subset V$.
If $U$ is invariant for $u\in {\rm GL}(V)$ then $U_{g}$ is invariant for $\overline{u}$ 
(the first part of Lemma \ref{lemma-t1-3}).
This means that $g|_{U}$ is a GL-mapping;
it is non-trivial (by Lemma \ref{lemma-t1-2}) and $\dim U_{g}=n-1$ (by Lemma \ref{lemma-t1-6}).
It follows from the arguments given above that $\dim S_{g}=n-2$ for every $(n-2)$-dimensional subspace $S\subset U$.
Step by step, we show that $\dim S_{g}=\dim S$ for every non-zero subspace $S\subset V$.
\end{proof}

\begin{lemma}\label{lemma-t1-8}
$g(0)=0$.
\end{lemma}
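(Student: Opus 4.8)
The goal is to prove $g(0)=0$, having already established (Lemma~\ref{lemma-t1-7}) that $g$ preserves the dimension of every non-zero subspace, that $g$ is injective on non-zero vectors and sends non-zero vectors to non-zero vectors (Lemmas~\ref{lemma-t1-2} and~\ref{lemma-t1-5}), and that $\dim V_g = n$ (Lemma~\ref{lemma-t1-6}).

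The plan is to exploit the relation $gu=\overline{u}g$ against a carefully chosen linear automorphism $u$ that fixes $0$ but moves things in a way that constrains $g(0)$. Since $u(0)=0$ for every $u\in\mathrm{GL}(V)$, we have $\overline{u}g(0)=gu(0)=g(0)$, so $g(0)$ lies in $\mathrm{Ker}(\mathrm{id}_{V_g}-\overline{u})$ for \emph{every} $u\in\mathrm{GL}(V)$. In other words, $g(0)$ is a common fixed vector of the entire image of the homomorphism $u\mapsto\overline{u}$. The strategy is to show that this common fixed space is trivial, forcing $g(0)=0$.

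First I would produce, for a single non-identity non-homothety $u$, a strong constraint. By Lemma~\ref{lemma-t1-4}, whenever $u$ is neither identity nor a homothety the map $\overline{u}$ is not the identity, so $\mathrm{Ker}(\mathrm{id}_{V_g}-\overline{u})$ is a proper subspace of $V_g$. The key step is then to intersect these fixed spaces over a suitable family of automorphisms. Concretely, fix a base $x_1,\dots,x_n$ of $V$; for a scalar $a\ne 0,1$ in the center of $R$ consider the homothety-type maps or, better, the diagonal automorphisms $u_i$ scaling $x_i$ by $a$ and fixing the other base vectors. Each such $u_i$ is neither identity nor a homothety (for $n\ge 2$), so $g(0)\in\mathrm{Ker}(\mathrm{id}_{V_g}-\overline{u_i})\subsetneq V_g$. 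Using Lemma~\ref{lemma-t1-7} to track how these fixed subspaces sit inside $V_g$ — each $\overline{u_i}$ fixes $(x_i)_g^{\perp}$-type complements and scales the line $(\langle x_i\rangle)_g$ — I would intersect over $i=1,\dots,n$ and conclude the common fixed space is $\{0\}$.

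The main obstacle will be controlling the fixed spaces of the $\overline{u_i}$ precisely enough to guarantee their intersection is trivial, since we do not yet know that $g$ is additive or semilinear and so cannot directly compute $\overline{u_i}$ as a diagonal matrix. The safeguard is that Lemma~\ref{lemma-t1-7} gives exact dimension counts: if $S$ is the $(n-1)$-dimensional subspace spanned by all base vectors except $x_i$, then $\mathrm{id}_{V}-u_i$ has kernel exactly $S$, so by the inclusion from Lemma~\ref{lemma-t1-3}(2) together with the dimension equality $\dim S_g=n-1$ and the fact that $\overline{u_i}\ne\mathrm{id}$, the fixed space $\mathrm{Ker}(\mathrm{id}_{V_g}-\overline{u_i})$ is forced to equal the $(n-1)$-dimensional subspace $S_g$. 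Then $g(0)$ lies in $\bigcap_{i=1}^{n}S_g^{(i)}$ where $S^{(i)}$ omits $x_i$; since these $n$ hyperplanes of $V$ meet only in $\{0\}$ and $g$ preserves dimensions of all their intersections (Lemma~\ref{lemma-t1-7}), the intersection of the $S_g^{(i)}$ has dimension $0$, giving $g(0)=0$.
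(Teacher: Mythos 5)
Your identification $\mathrm{Ker}(\mathrm{id}_{V_{g}}-\overline{u_i})=S^{(i)}_{g}$ is a sound dimension count (it mirrors the proof of Lemma \ref{lemma-t1-6}), and the conclusion $g(0)\in\bigcap_{i=1}^{n}S^{(i)}_{g}$ is correct --- in fact it is available for free, since $0\in S$ for every subspace $S$ and hence $g(0)\in g(S)\subset S_{g}$ without any kernel machinery. The genuine gap is in your last step. Lemma \ref{lemma-t1-7} says $\dim T_{g}=\dim T$ for every \emph{non-zero} subspace $T\subset V$; it says nothing about intersections of images. What you need is $\bigcap_{i=1}^{n}S^{(i)}_{g}=0$, but the only inclusion you have is
$$\bigcap_{i=1}^{n}S^{(i)}_{g}\;\supset\;\Bigl(\bigcap_{i=1}^{n}S^{(i)}\Bigr)_{g}\,=\,\langle g(0)\rangle,$$
and bounding the left-hand side by the right-hand side is precisely what must be proved; as stated, the appeal to Lemma \ref{lemma-t1-7} is circular, since $\bigl(\bigcap_i S^{(i)}\bigr)_g$ is the span of $g(0)$, the very vector you want to kill. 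Because $g$ is not yet known to be additive or semilinear, there is no a priori reason the $n$ hyperplanes $S^{(i)}_{g}$ of $V_{g}$ are in general position --- when $n\ge 3$, even $n$ pairwise distinct hyperplanes of an $n$-dimensional space can share a common line, and you have not shown the $S^{(i)}_{g}$ are even pairwise distinct. Closing this requires real work, e.g.\ establishing the equivariance $(v(S))_{g}=\overline{v}(S_{g})$ (conjugate a transvection with fixed space $S$ by $v$ and use that $u\mapsto\overline{u}$ is a homomorphism), using transitivity of $\mathrm{GL}(V)$ on pairs of distinct hyperplanes to rule out $S_{g}=S'_{g}$ for $S\ne S'$, and then an induction to get $\bigcap_{i\le k}S^{(i)}_{g}=\bigl(\bigcap_{i\le k}S^{(i)}\bigr)_{g}$ for $k<n$. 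A secondary, easily fixable defect: your diagonal automorphisms require a central scalar $a\ne 0,1$, which does not exist when $R={\mathbb F}_{2}$; transvections (as used in the proof of Lemma \ref{lemma-t1-6}) work over every division ring.

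For comparison, the paper's proof is one line and dual in spirit: it uses $1$-dimensional subspaces instead of hyperplanes. For every $1$-dimensional $P\subset V$ one has $g(0)\in P_{g}$ (because $0\in P$), each $P_{g}$ is $1$-dimensional by Lemma \ref{lemma-t1-7}, and there exist $P,Q$ with $P_{g}\ne Q_{g}$, since otherwise $V_{g}$ would be $1$-dimensional, contradicting $\dim V_{g}=n\ge 2$; hence $g(0)\in P_{g}\cap Q_{g}=0$. The general-position difficulty that blocks your argument simply evaporates for lines: two distinct $1$-dimensional subspaces automatically intersect in $0$, whereas $n$ distinct hyperplanes need not.
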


\begin{proof}
The condition $\dim V_{g}=n\ge 2$ implies the existence of $1$-dimensional subspaces $P,Q\subset V$ such that
$P_{g}$ and $Q_{g}$ are distinct $1$-dimensional subspaces of $V_{g}$.
Then $g(0)\in P_{g}\cap Q_{g}=0$.
\end{proof}

From this moment we suppose that $n\ge 3$. 
Then $\dim V'\ge 3$ (since $\dim V_{f}=n$ by Lemma \ref{lemma-t1-6}).
For every semilinear injection $l:V\to V'$  the mapping $\pi(l)$ sends every line of ${\mathcal P}(V)$ 
to a subset contained in a line of ${\mathcal P}(V')$. 
We will use the following generalization of the Fundamental Theorem of Projective Geometry.

\begin{theorem}[C. A. Faure, A. Fr\"{o}licher, H. Havlicek \cite{FaureFrolicher,Havlicek}]\label{theoremFFH}
If $f:{\mathcal P}(V)\to {\mathcal P}(V')$ is a mapping transferring every line 
to a subset in a line and the image of $f$ is not contained in a line then
$f$ is induced by a semilinear injection of $V$ to $V'$.
\end{theorem}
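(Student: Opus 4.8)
Since Theorem~\ref{theoremFFH} is a one-sided statement --- $f$ is only assumed to carry each line \emph{into} a line, with no surjectivity, no injectivity, and no reverse preservation of collinearity --- the whole difficulty is to upgrade this weak ``lineation'' hypothesis to genuine semilinearity. The plan is to reconstruct a semilinear injection $l\colon V\to V'$ from the incidence data of $f$ by coordinatizing both spaces and recovering the associated ring homomorphism through von Staudt type constructions. As a preliminary I would record that $f$ maps every projective subspace into a projective subspace of no larger dimension: this follows by induction on dimension, since a point of a subspace lies on a line joining a fixed vertex to a point of a subspace of one smaller dimension, and $f$ sends such a line into a line. In particular $V_{f}$ is spanned by the images of any basis of $V$, so $3\le\dim V_{f}\le n$, the lower bound being the non-degeneracy hypothesis and the upper bound coming for free.

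Next I would isolate a configuration on which $f$ is non-degenerate. Pick points $a,b,c$ with $f(a),f(b),f(c)$ linearly independent, possible because $\dim V_{f}\ge 3$. Then $a,b,c$ cannot be collinear, for otherwise their images would be collinear; hence they span a $3$-dimensional subspace $\Pi$, and $f$ maps $\mathcal P(\Pi)$ into the projective plane $\mathcal P(\Pi')$ spanned by $f(a),f(b),f(c)$, with image spanning all of $\Pi'$. The crucial subtlety is that a semilinear injection need \emph{not} be a $2$-embedding, so $f$ may collapse individual lines to single points even on $\Pi$; the image of $\mathcal P(\Pi)$ is in general only a proper subplane (the $\sigma(R)$-rational points) of $\mathcal P(\Pi')$. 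I would therefore take $f|_{\mathcal P(\Pi)}$ as the basic object: a lineation of a projective plane whose image spans the target plane.

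The heart of the proof is the recovery of a homomorphism $\sigma\colon R\to R'$ from $f|_{\mathcal P(\Pi)}$. Fixing on a base line three points to serve as $0,1,\infty$, the division-ring operations are expressed by purely projective (Desarguesian) figures: the sum and the product of two points arise as intersections of lines drawn through auxiliary points off the base line. Because $f$ carries lines into lines, it carries each figure to the corresponding figure in the image, \emph{provided} none of the auxiliary lines or intersection points used is destroyed by collapsing. Here the room afforded by $\dim V_{f}\ge 3$ is exploited to choose all auxiliary data off the collapsing locus, so that the figures survive; tracking them shows that the induced coordinate map is additive and multiplicative, hence a non-zero, and therefore injective, ring homomorphism $\sigma$. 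This step --- verifying that the one-sided lineation actually preserves every incidence entering the addition and multiplication figures, and organizing the choice of auxiliary points so as to skirt the collapsed lines --- is where I expect essentially all of the difficulty to sit.

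With $\sigma$ in hand I would fix a basis $e_{1},\dots,e_{n}$ of $V$ whose first members span $\Pi$, choose representatives $v_{i}$ of $f(\langle e_{i}\rangle)$ normalized against the unit points so that the linear relations among the images of the frame are matched, and define $l$ by $l\!\left(\sum_{i}a_{i}e_{i}\right)=\sum_{i}\sigma(a_{i})v_{i}$. That $l$ is $\sigma$-semilinear is built into the definition; that it is injective follows from injectivity of $\sigma$ together with the independence of those $v_{i}$ that span $V_{f}$. It remains to verify $\pi(l)=f$ on all of $\mathcal P(V)$: for an arbitrary point one evaluates $f$ along the frame lines and matches the projective coordinates against the von Staudt recovery of $\sigma$, the subspace-preservation lemma guaranteeing that this local matching propagates consistently from $\Pi$ to the whole space. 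A frame-free alternative would replace the coordinatization by the closure-operator formalism of Faure and Fr\"{o}licher, but the two essential ingredients --- securing a non-degenerate plane and running von Staudt across possible collapses --- are the same in either route.
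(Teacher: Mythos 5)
First, a point of reference: the paper does not prove Theorem~\ref{theoremFFH} at all --- it is imported, with attribution, from \cite{FaureFrolicher,Havlicek}, where the semilinear map is constructed directly (Faure--Fr\"olicher build $l$ by proving additivity of a suitably defined vector map, with well-definedness checked through plane configurations; Havlicek generalizes Brauner's theorem), not via coordinatization. Your opening reductions are sound: subspace monotonicity by induction on dimension, $3\le\dim V_{f}\le n$, and the extraction of a plane $\Pi$ with $f(a),f(b),f(c)$ independent. But one of your central cautions is misplaced: on the nondegenerate plane $\Pi$ no line can collapse. Indeed, if $f$ were constant with value $p'$ on a line $\ell\subset\Pi$, pick $z\in{\mathcal P}(\Pi)\setminus{\mathcal P}(\ell)$ with $f(z)\ne p'$ (possible since the image of ${\mathcal P}(\Pi)$ spans a plane); every $w\in{\mathcal P}(\Pi)$ lies on a line through $z$ meeting $\ell$, whence $f(w)$ lies on the line joining $f(z)$ and $p'$, contradicting nondegeneracy of $\Pi$. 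So the collapsing locus you propose to ``skirt'' inside $\Pi$ is empty, while the von Staudt recovery of $\sigma$ --- where you yourself locate ``essentially all of the difficulty'' --- is merely asserted, not executed.

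The genuine gap is the globalization, where collapsing really does occur. Take $R={\mathbb Q}$, $R'={\mathbb R}$ and $l_{0}:{\mathbb Q}^{6}\to{\mathbb R}^{3}$ defined by $(a_{1},\dots,a_{6})\mapsto(a_{1}+a_{2}\sqrt2,\;a_{3}+a_{4}\sqrt2,\;a_{5}+a_{6}\sqrt2)$; then $f=\pi(l_{0})$ satisfies all hypotheses of the theorem, and the frame line $\langle e_{1},e_{2}\rangle$ collapses: $f(\langle e_{1}\rangle)=f(\langle e_{2}\rangle)=f(\langle e_{1}+e_{2}\rangle)$. Hence the unit point of that line carries no normalization data, so your recipe --- representatives $v_{i}$ ``normalized against the unit points,'' then $l(\sum_{i}a_{i}e_{i})=\sum_{i}\sigma(a_{i})v_{i}$ --- cannot determine the relative scalar between $v_{1}$ and $v_{2}$; worse, since every semilinear map inducing $f$ is a scalar multiple of $l_{0}$, that scalar is forced to be $\sqrt2\notin\sigma({\mathbb Q})$, so the frame coordinates of image points do not lie in $\sigma(R)$ and your verification of $\pi(l)=f$ by ``matching projective coordinates against the von Staudt recovery of $\sigma$'' cannot proceed. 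The correct scalars must be harvested from cross-lines such as $\langle e_{2},e_{3}\rangle$ (here $f(\langle e_{2}+e_{3}\rangle)=\langle(\sqrt2,1,0)\rangle$) and shown mutually consistent across all such paths; this well-definedness argument is precisely the substance of the actual proofs and is what your phrase ``propagates consistently'' elides. Note also that your injectivity claim fails as stated: with collapsed frame lines the $v_{i}$ are dependent (in the example $\dim V_{f}=3<n=6$), the admissible choice $v_{2}=v_{1}$ gives $l(e_{1}-e_{2})=0$, and the injectivity of the true $l_{0}$ rests on the ${\mathbb Q}$-independence of $1$ and $\sqrt2$ --- a property of the specific recovered scalars, not a consequence of the injectivity of $\sigma$ plus independence of a spanning subset of the $v_{i}$. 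As it stands, the proposal reduces the theorem to two unproven claims and an explicit construction that the example above refutes.
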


Consider the mapping $f$ which transfers every $P\in {\mathcal P}(V)$ to $P_{g}\in {\mathcal P}(V')$.
It sends the line of ${\mathcal P}(V)$ defined  by a $2$-dimensional subspace $S\subset V$
to a subset in the line of ${\mathcal P}(V')$ defined by $S_{g}$.
The condition $\dim V_{g}=n\ge 3$ guarantees that the image of $f$ is not contained in a line.
Therefore, $f$ satisfies the conditions of Theorem \ref{theoremFFH}, i.e. $f$ is induced by a semilinear injection $l:V\to V'$.
Then for every vector $x\in V\setminus\{0\}$ there exists a non-zero scalar $a_{x}\in R'$ such that
$$g(x)=a_{x}l(x).$$
Since $V_{g}=V_{l}$ and $V_{g}$ is $n$-dimensional,
$l$ transfers every base of $V$ to a base of $V_{g}$. 
The latter means that $l$ is a strong semilinear embedding of $V$ in $V_{g}$.

\begin{lemma}\label{lemma-t1-9}
$a_{x}=a_{y}$ for all $x,y\in V\setminus\{0\}$.
\end{lemma}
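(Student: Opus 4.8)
The plan is to exploit the fact that $l$, being a strong semilinear embedding, is itself a non-trivial {\rm GL}-mapping, so by Lemma \ref{lemma-t1-1} it determines a homomorphism $u\mapsto\widehat{u}$ of ${\rm GL}(V)$ to ${\rm GL}(V_{g})={\rm GL}(V_{l})$ with $lu=\widehat{u}l$ for every $u\in{\rm GL}(V)$. First I would compare the automorphisms $\overline{u}$ and $\widehat{u}$. Both $g$ and $l$ induce the same mapping $f\colon P\mapsto P_{g}=P_{l}$ of ${\mathcal P}(V)$, and the relations $gu=\overline{u}g$, $lu=\widehat{u}l$ show that $\overline{u}$ and $\widehat{u}$ induce transformations of ${\mathcal P}(V_{g})$ agreeing on the image of $f$. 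Since $l$ is a strong semilinear embedding, this image contains a frame of ${\mathcal P}(V_{g})$ (for a base $e_{1},\dots,e_{n}$ of $V$ take the points spanned by $l(e_{1}),\dots,l(e_{n})$ and by $l(e_{1}+\dots+e_{n})=l(e_{1})+\dots+l(e_{n})$). Two elements of ${\rm GL}(V_{g})$ inducing transformations that agree on a frame induce the same element of ${\rm PGL}(V_{g})$, hence they differ by an element of the kernel of ${\rm GL}(V_{g})\to{\rm PGL}(V_{g})$, i.e.\ by a homothety with ratio in $Z(R')\setminus\{0\}$. Thus there is a scalar $\lambda_{u}\in Z(R')\setminus\{0\}$ with $\overline{u}=\lambda_{u}\widehat{u}$.

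Second, I would feed this into the defining relation. For $x\in V\setminus\{0\}$ we have $g(x)=a_{x}l(x)$, and using the left-linearity of $\overline{u}$ together with $\widehat{u}\,l(x)=l(ux)$,
\[
a_{ux}\,l(ux)=g(ux)=\overline{u}\,g(x)=a_{x}\,\overline{u}\,l(x)=a_{x}\lambda_{u}\,\widehat{u}\,l(x)=a_{x}\lambda_{u}\,l(ux).
\]
As $l(ux)\neq 0$ and $R'$ is a division ring, comparing the left coefficients gives
\[
a_{ux}=a_{x}\lambda_{u}\qquad(x\in V\setminus\{0\}).
\]
In particular $\lambda_{u}$ does not depend on $x$, and from $a_{uvx}=a_{vx}\lambda_{u}=a_{x}\lambda_{v}\lambda_{u}=a_{x}\lambda_{uv}$ we see that $u\mapsto\lambda_{u}$ is a homomorphism of ${\rm GL}(V)$ to the abelian group $Z(R')\setminus\{0\}$.

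Finally, a homomorphism to an abelian group is trivial on the commutator subgroup of ${\rm GL}(V)$. Since $n\ge 3$, every transvection lies in this commutator subgroup (each elementary transvection is a single commutator of elementary transvections, and an arbitrary transvection is conjugate to an elementary one), so $\lambda_{t}=1$ for every transvection $t$. Given linearly independent $x,y\in V\setminus\{0\}$, there is a transvection $t$ with $t(x)=y$: take $t\colon v\mapsto v+\phi(v)(y-x)$ for a linear functional $\phi$ with $\phi(x)=\phi(y)=1$. Then $a_{y}=a_{t(x)}=a_{x}\lambda_{t}=a_{x}$. If $x,y$ are linearly dependent, choose $z$ linearly independent of $x$ (possible since $n\ge 2$); as $z$ is then also independent of $y$, we get $a_{x}=a_{z}=a_{y}$. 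Hence $a_{x}$ is constant. The main obstacle is the first step: producing the central scalar $\lambda_{u}$, which rests on the frame argument and on the description of the kernel of ${\rm GL}(V_{g})\to{\rm PGL}(V_{g})$ in the noncommutative setting; everything afterwards is bookkeeping with the single identity $a_{ux}=a_{x}\lambda_{u}$.
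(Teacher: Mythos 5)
Your argument breaks down at its first and crucial step, and it fails precisely in the generality for which Lemma \ref{lemma-t1-9} is needed: in Theorem \ref{theorem-main1} the division ring $R'$ is arbitrary (commutativity of $R'$ is assumed only in Theorem \ref{theorem-main2}). The claim that two elements of ${\rm GL}(V_{g})$ whose induced projective transformations agree on a frame must differ by a homothety with central ratio is the frame-uniqueness statement of the Fundamental Theorem, and it is false over a non-commutative $R'$. Concretely, fix a base $v_{1},\dots,v_{n}$ of $V_{g}$ and $c\in R'\setminus Z(R')$, and let $w\in{\rm GL}(V_{g})$ be the (left-linear) map $\sum a_{i}v_{i}\mapsto \sum a_{i}cv_{i}$. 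Then $w$ fixes each point $\langle v_{i}\rangle$ and $\langle v_{1}+\dots+v_{n}\rangle$, so $\pi(w)$ agrees with the identity on the frame; but $w$ is not a central homothety and $\pi(w)\neq{\rm id}$, since $w(v_{1}+av_{2})=cv_{1}+acv_{2}$ is not a left scalar multiple of $v_{1}+av_{2}$ whenever $ac\neq ca$. Thus the pointwise stabilizer of a frame in ${\rm PGL}(V_{g})$ is non-trivial (essentially $R'^{*}/Z(R')^{*}$), and you cannot conclude $\overline{u}=\lambda_{u}\widehat{u}$ with $\lambda_{u}$ central. Using the whole image of $f$ instead of a frame does not repair this: the most one can extract --- and this is what the paper does, applying \cite[Proposition 1.7]{Pankov1} to the semilinear injections $\overline{u}l$ and $\widehat{u}l$ --- is $\overline{u}l=b_{u}\widehat{u}l$ with $b_{u}\in R'$ non-zero but possibly non-central; that is, $\overline{u}$ and $\widehat{u}$ differ by a map of the form $w$ above, not by a homothety.

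The damage propagates to your endgame. With a possibly non-central $b_{u}$, the relation $a_{u(x)}=a_{x}b_{u}$ defines an (anti)homomorphism $u\mapsto b_{u}$ into the non-abelian group $R'^{*}$, so it need not vanish on the commutator subgroup of ${\rm GL}(V)$; transvections only give $b_{t}\in[R'^{*},R'^{*}]$, not $b_{t}=1$. The paper's conclusion avoids centrality entirely and is simpler than the transvection argument: given $x,y$, pick $z\notin\langle x,y\rangle$ (this is where $n\ge 3$ enters) and $u\in{\rm GL}(V)$ with $u(x)=y$ and $u(z)=z$; then $a_{z}=a_{u(z)}=a_{z}b_{u}$ forces $b_{u}=1$, hence $a_{y}=a_{u(x)}=a_{x}b_{u}=a_{x}$. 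If $R'$ were assumed to be a field, your proof would be correct (and even then your commutator argument could be replaced by this fixed-vector trick), but as a proof of Lemma \ref{lemma-t1-9} in the stated generality it has a genuine gap.
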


\begin{proof}
Since $l$  is a GL-mapping, for every $u\in {\rm GL}(V)$ there exists $\tilde{u}\in {\rm GL}(V_{g})$
such that $lu=\tilde{u}l$.
Then
$$a_{x}\overline{u}l(x)=\overline{u}g(x)=gu(x)=a_{u(x)}lu(x)=a_{u(x)}\tilde{u}l(x)$$
and
\begin{equation}\label{eq-t1-3}
\overline{u}l(x)=a^{-1}_{x}a_{u(x)}\tilde{u}l(x)
\end{equation}
for every $x\in V\setminus\{0\}$.
The mappings  $\overline{u}l$ and $\tilde{u}l$ both are semilinear injections
and \eqref{eq-t1-3} guarantees that $\pi(\overline{u}l)=\pi(\tilde{u}l)$.
By \cite[Proposition 1.7]{Pankov1}, there exists a non-zero scalar $b_{u}\in R'$
such that $\overline{u}l=b_{u}\tilde{u}l$. 
Then \eqref{eq-t1-3} implies that 
$$a_{u(x)}=a_{x}b_{u}.$$
Let $x,y\in V\setminus\{0\}$. We take any vector $z\in V$ which does not belong to the subspace
spanned by $x,y$ (since $n\ge 3$, this is possible). There exists $u\in {\rm GL}(V)$ satisfying
$u(x)=y$ and $u(z)=z$.
Then $a_{z}=a_{u(z)}=a_{z}b_{u}$ which means that $b_{u}=1$ and $a_{y}=a_{u(x)}=a_{x}b_{u}=a_{x}$.
\end{proof}

Lemmas \ref{lemma-t1-8} and \ref{lemma-t1-9} show that $g$ is a scalar multiple of 
the strong semilinear embedding $l$, i.e.
$g$ is a strong semilinear embedding.

\section{Extendability of permutations on finite subsets of projective spaces}

Let ${\mathcal X}$ be a finite subset of ${\mathcal P}(V)$ containing more than one element.
Denote by $S({\mathcal X})$ the group of all permutations on ${\mathcal X}$.
In this section we determine all cases when every element of $S({\mathcal X})$ can be extended to an element of ${\rm PGL}(V)$
if $R$ is a field.

We say that $P_{1},\dots,P_{m}\in {\mathcal P}(V)$ form an {\it independent} subset if
non-zero vectors
$x_{1}\in P_{1},\dots,x_{m}\in P_{m}$
are linearly independent.
Every permutation on an independent subset can be extended to an element of ${\rm PGL}(V)$.

Let $m\in \{2,\dots,n\}$.
An $(m+1)$-element subset ${\mathcal X}\subset {\mathcal P}(V)$ is called an $m$-{\it simplex}
if it is not independent and every $m$-element subset of ${\mathcal X}$ is independent.
For example, if $x_{1},\dots,x_{m}\in V$ are linearly independent and all 
$a_{1},\dots,a_{m}\in R$ are non-zero then
$$\langle x_{1}\rangle,\dots, \langle x_{m}\rangle\;\mbox{ and }\;\langle a_{1}x_{1}+\dots+a_{m}x_{m}\rangle$$
form an $m$-simplex.
Conversely, if $\{P_{1},\dots,P_{m+1}\}$ is an $m$-simplex then
there exist linearly independent vectors
$$x_{1}\in P_{1}\setminus\{0\},\dots,x_{m}\in P_{m}\setminus\{0\}\;
\mbox{ such that }\;
P_{m+1}=\langle x_{1}+\dots+x_{m}\rangle.$$
Every permutation on an $m$-simplex can be extended to an element of ${\rm PGL}(V)$
\cite[Section III.3, Proposition 1]{Baer}.

Following \cite[Section III.4, Remark 5]{Baer} we say that a subset ${\mathcal X}\subset {\mathcal P}(V)$ is {\it harmonic}
if there are linearly independent vectors $x,y\in V$ such that
$${\mathcal X}=\{\langle x \rangle,\langle y \rangle,\langle x+y\rangle,\langle x-y\rangle\}.$$

\begin{exmp}\label{exmp-p2-1}{\rm
Suppose that the characteristic of $R$ is equal to $3$ and ${\mathcal X}$ is the har\-mo\-nic subset
consisting of
$$P_{1}=\langle x \rangle,\;P_{2}=\langle y \rangle,\;P_{3}=\langle x+y\rangle,\;P_{4}=\langle x-y\rangle.$$
Consider $u_{1},u_{2},u_{3}\in {\rm GL}(V)$ satisfying the following conditions
$$\begin{array}{ll}
u_{1}(x)=y&u_{1}(y)=x,\\
u_{2}(x)=-x&u_{2}(y)=x+y,\\
u_{3}(x)=x&u_{3}(y)=-y.
\end{array}$$
Since the characteristic of $R$ is equal to $3$,  we have
$$u_{2}(x-y)=-x-(x+y)=-2x-y=x-y.$$
A direct verification shows  that every $\pi(u_{i})$
is an extension of the transposition $(P_{i},P_{i+1})$.
Since the group $S({\mathcal X})$ is spanned by all transpositions of type $(P_{i},P_{i+1})$,
every permutation on ${\mathcal X}$ can be extended to an element of  ${\rm PGL}(V)$.
}\end{exmp}

\begin{prop}\label{prop2}
Suppose that $R$ is a field.
If  every permutation on ${\mathcal X}$ can be extended to an element of ${\rm PGL}(V)$ then one of
the following possibilities is realized:
\begin{enumerate}
\item[$\bullet$] ${\mathcal X}$ is an independent subset;
\item[$\bullet$] ${\mathcal X}$ is an $m$-simplex, $m\in \{2,\dots,n\}$;
\item[$\bullet$] the characteristic of $R$ is equal to $3$ and ${\mathcal X}$ is a harmonic subset.
\end{enumerate}
\end{prop}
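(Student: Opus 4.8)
The plan is to mimic the proof of Proposition \ref{prop1}, replacing vectors by the points they generate and keeping track of the scalar ambiguities inherent to $\mathrm{PGL}(V)$. First I would reduce to the case when $\mathcal{X}$ spans $V$. Indeed, if $W$ is the subspace spanned by $\mathcal{X}$ and a permutation $s\in S(\mathcal{X})$ extends to $h\in\mathrm{PGL}(V)$, then $h$ maps $\mathcal{X}$ onto itself, preserves $\langle h(\mathcal{X})\rangle=W$, and restricts to an element of $\mathrm{PGL}(W)$; conversely every element of $\mathrm{PGL}(W)$ extends to $\mathrm{PGL}(V)$ by acting as the identity on a complement. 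So the hypothesis is equivalent to the same statement inside $\mathcal{P}(W)$, and I may assume $\mathcal{X}$ spans $V$. Put $d=\dim V$; since $\mathcal{X}$ has more than one (distinct) point, $d\ge2$. If $\mathcal{X}$ is independent we are in the first case, so assume it is not; then $\mathcal{X}$ has $k>d$ points. Choosing a maximal independent subset, I fix a basis $x_1,\dots,x_d$ with $P_i=\langle x_i\rangle\in\mathcal{X}$ and write each remaining point as $P_p=\langle v_p\rangle$, $v_p=\sum_{l} a^{(p)}_l x_l$.

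The engine of the argument is the behaviour of a transposition $(P_i,P_j)$ of two frame points. As a permutation of $\mathcal{X}$ it fixes every other point, so any extension $\pi(u)$ satisfies $u(x_l)=c_lx_l$ for $l\ne i,j$, together with $u(x_i)=c_ix_j$, $u(x_j)=c_jx_i$, and $u(v_p)=\mu_p v_p$ for every extra point $p$. Comparing coefficients in $u(v_p)=\mu_p v_p$ first shows that no coordinate $a^{(p)}_l$ can vanish: a zero coordinate $a^{(p)}_i$ together with a nonzero $a^{(p)}_j$ would force $a^{(p)}_jc_j=0$, which is impossible. Hence every extra point has full support, and after rescaling the basis I normalize one extra point to the unit point $P_{d+1}=\langle x_1+\dots+x_d\rangle$.

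Now I would split according to $d$. If $d\ge3$, then for a transposition $(P_i,P_j)$ there is a third index $l_0$, and the relation $c_{l_0}=\mu_p$ forces $\mu_p$ to equal the fixed scalar $c_{l_0}$ for all extra points; the remaining relations then give $a^{(p)}_j/a^{(p)}_i=c_i/\mu$, a value independent of $p$, which equals $1$ by the normalization at $P_{d+1}$. Thus $a^{(p)}_i=a^{(p)}_j$ for all $i,j$, every extra point equals $P_{d+1}$, and distinctness forces $k=d+1$, so $\mathcal{X}$ is a $d$-simplex. If $d=2$ there is no third index, and the transposition $(P_1,P_2)$ only delivers $a_2^2c_2=a_1^2c_1$ for each extra point; evaluating at the unit point gives $c_1=c_2$, whence every extra point satisfies $b_1^2=b_2^2$, so $b_1=\pm b_2$ and lies in $\{\langle x_1+x_2\rangle,\langle x_1-x_2\rangle\}$. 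Hence $\mathcal{X}$ is either a $2$-simplex ($k=3$) or the harmonic quadruple $\{\langle x_1\rangle,\langle x_2\rangle,\langle x_1+x_2\rangle,\langle x_1-x_2\rangle\}$ ($k=4$).

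The main obstacle is the harmonic case, where I must still show the characteristic is $3$. For this I would test the one transposition not yet used, the swap of $\langle x_1\rangle$ and $\langle x_1+x_2\rangle$ fixing $\langle x_2\rangle$ and $\langle x_1-x_2\rangle$: writing its extension as $u(x_2)=\beta x_2$, $u(x_1)=\gamma(x_1+x_2)$, the requirement that $\langle x_1+x_2\rangle$ map into $\langle x_1\rangle$ gives $\beta=-\gamma$, and then invariance of $\langle x_1-x_2\rangle$ forces $3\gamma=0$ with $\gamma\ne0$, so $\mathrm{char}\,R=3$. Throughout, commutativity of $R$ is used essentially — in inverting the coordinate ratios and in factoring $b_1^2-b_2^2$ — which is exactly why the field hypothesis cannot be dropped; the genuinely delicate bookkeeping is the $d=2$ analysis and the extraction of the characteristic.
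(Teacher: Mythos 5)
Your proof is correct, and it reaches all three alternatives with no gaps: the reduction to the spanning case is legitimate (an extension $h=\pi(u)$ of a permutation of $\mathcal{X}$ satisfies $u(W)=W$ for $W=\langle\mathcal{X}\rangle$, so it restricts to an element of ${\rm PGL}(W)$), the full-support argument, the $d\ge 3$ and $d=2$ computations, and the characteristic-$3$ extraction all check out.

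The skeleton is the same as the paper's --- lift transpositions of the frame points to show every remaining point of $\mathcal{X}$ has full support, normalize one remaining point to the unit point $\langle x_1+\dots+x_m\rangle$, and in the harmonic case get ${\rm char}\,R=3$ from the swap of $\langle x_1\rangle$ with $\langle x_1+x_2\rangle$ (your $\beta=-\gamma$, $3\gamma=0$ is exactly the paper's $c=-1$, $2=-1$ with the lift normalized so that $\gamma=1$). But your middle step is genuinely different. To control the points of $\mathcal{X}$ beyond the unit point, the paper swaps each such $P_p$ with the unit point $P_{m+1}$, invoking Lemma \ref{lemma-pr2} to choose a lift sending $x_{m+1}$ to $x_p$; this gives $b_1^2=\dots=b_m^2$, hence $x_p=b(\varepsilon_1 x_1+\dots+\varepsilon_m x_m)$ with $\varepsilon_i=\pm 1$, and a further transposition $(P_q,P_{q+1})$ placed across a sign change forces $q=1$ and $m=2$. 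You instead split on the rank $d$: for $d\ge 3$ you only ever lift transpositions $(P_i,P_j)$ of frame points, and the existence of a third fixed basis direction pins the eigenvalue $\mu_p$ to a value independent of $p$, so that comparison with the unit point makes all coordinates of every extra point equal --- every extra point \emph{is} the unit point, and $\mathcal{X}$ is a $d$-simplex outright; the harmonic configuration is thereby confined to $d=2$, where the single frame transposition already yields $b_1^2=b_2^2$. Your route never needs Lemma \ref{lemma-pr2} (you carry unnormalized lifts and scalars $c_l,\mu_p$ throughout) and it bypasses the sign-vector analysis together with its characteristic-$2$ digression, which your $d=2$ case absorbs automatically (in characteristic $2$ the two candidate extra points coincide, so only the $2$-simplex survives); the price is the case split on $d$, harmless since the rank-$2$ situation is precisely where the exceptional harmonic case lives. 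Both arguments use commutativity of $R$ at the same spots, so neither extends to division rings.
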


\begin{lemma}\label{lemma-pr2}
If $R$ is a field and $f$ is an element of ${\rm PGL}(V)$ transferring $P\in {\mathcal P}(V)$ to $Q\in {\mathcal P}(V)$
then for any non-zero vectors $x\in P$ and $y\in Q$ there exists $u\in {\rm GL}(V)$
such that $f=\pi(u)$ and $u(x)=y$.
\end{lemma}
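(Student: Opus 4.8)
The plan is to lift $f$ to a linear automorphism and then rescale it by a single scalar so that $x$ is sent exactly to $y$ while the induced projective transformation is unchanged. First I would pick any $w\in {\rm GL}(V)$ with $\pi(w)=f$; such a $w$ exists because ${\rm PGL}(V)$ consists, by definition, of the transformations induced by linear automorphisms. Since $f(P)=Q$ and $x\in P$, the automorphism $w$ carries the line $P$ onto the line $Q$, so $w(x)\in Q$. As $y$ spans $Q$ and $w(x)\ne 0$, there is a non-zero scalar $b\in R$ with $w(x)=by$.

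Next I would set $u=b^{-1}w$, that is $u(z)=b^{-1}w(z)$ for all $z\in V$, and verify the two required properties. For the value at $x$ one gets immediately $u(x)=b^{-1}w(x)=b^{-1}by=y$. For the equality $\pi(u)=f$, observe that $u$ and $w$ differ by the map $z\mapsto b^{-1}z$, which is a homothety; since homothecies form the kernel of $\pi$, we obtain $\pi(u)=\pi(w)=f$. It also remains to confirm that $u$ genuinely lies in ${\rm GL}(V)$: it is the composition of $w$ with an invertible scalar map, hence bijective, and one checks $R$-linearity directly from $u(az)=b^{-1}aw(z)$.

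I do not expect a genuine obstacle here; the only point that must be handled with care is exactly the hypothesis that $R$ is a field. Commutativity of $R$ is what makes $b^{-1}$ central, which is needed both to guarantee that $z\mapsto b^{-1}z$ is $R$-linear on the left module $V$ and to place this map in the kernel of $\pi$. Over a general division ring neither of these would hold for an arbitrary scalar $b^{-1}$, and the rescaling step would break down; this is precisely why the assumption on $R$ appears in the statement.
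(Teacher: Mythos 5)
Your proof is correct and follows essentially the same route as the paper: lift $f$ to some $w\in{\rm GL}(V)$, write $w(x)=by$, and rescale to $u=b^{-1}w$, using commutativity of $R$ to keep $u$ linear and in the same fiber of $\pi$. The paper's proof is exactly this (with the scalar called $a$), and your closing remark on why the field hypothesis is needed matches the remark the paper places right after the lemma.
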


\begin{proof}
We take any $v\in {\rm GL}(V)$ satisfying $f=\pi(v)$. Then $v(x)=ay$ and the linear automorphism
$u:=a^{-1}v$ is as required.
\end{proof}

\begin{rem}{\rm
If $R$ is non-commutative then
a scalar multiple of a linear mapping is linear only in the case when the scalar belongs to the center of $R$.
}\end{rem}

\begin{proof}[Proof of Proposition \ref{prop2}]
Let $P_{1},\dots,P_{k}$ be the elements of ${\mathcal X}$.
If ${\mathcal X}$ is not independent then we take any maximal independent subset in ${\mathcal X}$.
Suppose that it is formed by $P_{1},\dots,P_{m}$, $k<m$ and
consider $P_{p}$ with $p>m$. Every non-zero vector $y\in P_{p}$ is a linear combination of
non-zero vectors $y_{1}\in P_{1},\dots,y_{m}\in P_{m}$.
If this linear combination contains $y_{i}$ and does not contain $y_{j}$ for some $i,j\le m$ then
an element of ${\rm PGL}(V)$ extending the transposition $(P_{i},P_{j})$
does not leave fixed $P_{p}$ which is impossible.
This means that
$$y=a_{1}y_{1}+\dots+a_{m}y_{m},$$
where all $a_{1},\dots,a_{m}\in R$ are non-zero.

Thus $P_{1},\dots,P_{m}$ and $P_{p}$ form an $m$-simplex for every $p>m$.
If ${\mathcal X}$ consists of $m+1$ elements, i.e. $k=m+1$, then ${\mathcal X}$ is an $m$-simplex.
Consider the case when $k\ge m+2$.

We choose non-zero vectors $x_{1}\in P_{1},\dots,x_{m}\in P_{m}$ such that
$$x_{m+1}:=x_{1}+\dots+x_{m}\in P_{m+1}.$$
If $p\ge m+2$ then
$$P_{p}=\langle x_{p}\rangle,\;\mbox{ where }\; x_{p}= b_{1}x_{1}+\dots+b_{m}x_{m}$$
and all $b_{1},\dots,b_{m}\in R$ are non-zero.
Let $v$ be a linear  automorphism of $V$ such that $\pi(v)$ is an extension of the transposition $(P_{m+1},P_{p})$.
By Lemma \ref{lemma-pr2}, we can suppose that $v$ sends $x_{m+1}$ to $x_{p}$.
Since $x_{1},\dots,x_{m}$ are linearly independent and $v(P_{i})=P_{i}$ for every $i\le m$,
the equality
$$v(x_{1})+\dots+ v(x_{m})=v(x_{m+1})=x_{p}=b_{1}x_{1}+\dots+b_{m}x_{m}$$
shows that $v(x_{i})=b_{i}x_{i}$ for every $i\le m$.
Then
$$v(x_{p})=b_{1}v(x_{1})+\dots+b_{m}v(x_{m})=b^{2}_{1}x_{1}+\dots+b^{2}_{m}x_{m}\in P_{m+1}$$
which means that $b^{2}_{1}=b^{2}_{2}=\dots=b^{2}_{m}$
and $b_{i}=\pm b_{j}$ for any $i,j\le m$. In other words,
$$x_{p}=b(\varepsilon_{1}x_{1}+\dots+\varepsilon_{m}x_{m}),$$
where $\varepsilon_{i}=\pm 1$ for every $i\in \{1,\dots,m\}$.
Since $x_{m+1}$ and $x_{p}$ are linearly independent,
$\varepsilon_{i}\ne\varepsilon_{j}$ for some pairs $i,j\le m$.
This guarantees that the characteristic of $R$ is not equal to $2$
and we can assume that
$$P_{p}=\langle x_{p}\rangle,\; \mbox{ where }\;
x_{p}=x_{1}+\dots+x_{q}-x_{q+1}-\dots-x_{m}
$$
and $1\le q<m$.

Now consider a linear automorphism $u\in {\rm GL}(V)$ such that $\pi(u)$ is an extension of the transposition $(P_{q},P_{q+1})$.
Then
$$u(P_{q})=P_{q+1},\;u(P_{q+1})=P_{q}\;\mbox{ and }\;u(P_{i})=P_{i}\;\mbox{ if }\;i\ne q,q+1.$$
By Lemma \ref{lemma-pr2}, we suppose that $u$ leaves fixed $x_{m+1}$.
Since $x_{1},\dots,x_{m}$ are linearly independent,
the equality
$$u(x_{1})+\dots+ u(x_{m})=u(x_{m+1})=x_{m+1}=x_{1}+\dots+ x_{m}$$
implies that
$$u(x_{q})=x_{q+1},\;u(x_{q+1})=x_{q}\;\mbox{ and }\;u(x_{i})=x_{i}\;\mbox{ if }\;i\ne q,q+1,\;i\le m.$$
Then
$$u(x_{p})=u(x_{1})+\dots+u(x_{q})-u(x_{q+1})-\dots-u(x_{m})$$
belongs to $P_{p}$ only in the case when  $q=1$ and $m=2$, i.e.
$P_{p}=\langle x_{1}-x_{2}\rangle$ for every $p\ge 4$.
The latter means that ${\mathcal X}$ is the harmonic subset consisting of
$$P_{1}=\langle x_{1} \rangle,\;P_{2}=\langle x_{2} \rangle,\;P_{3}=\langle x_{1}+x_{2}\rangle,
\;P_{4}=\langle x_{1}-x_{2}\rangle.$$

Let $w$ be a linear automorphism of $V$ such that $\pi(w)$ is an extension of the transposition $(P_{1},P_{3})$
and $w(x_{1})=x_{1}+x_{2}$ (see Lemma \ref{lemma-pr2}).
Since $w(P_{2})=P_{2}$ and $w(P_{3})=P_{1}$, we have
$$w(x_{1}+x_{2})=w(x_{1})+w(x_{2})=(x_{1}+x_{2})+cx_{2}\in P_{1}.$$
Then $c=-1$ and $w(x_{2})=-x_{2}$.
The equality $w(P_{4})=P_{4}$ implies that
$$w(x_{1}-x_{2})=w(x_{1})-w(x_{2})=(x_{1}+x_{2})+x_{2}=x_{1}+2x_{2}\in P_{4}.$$
Hence $x_{1}+2x_{2}=x_{1}-x_{2}$ and $2=-1$, i.e. the characteristic of $R$ is equal to $3$.
\end{proof}

Also, we will need the following simple lemma concerning harmonic subsets.

\begin{lemma}\label{lemma-harm}
If the intersection of two harmonic subsets contains three elements
then these harmonic subsets are coincident.
\end{lemma}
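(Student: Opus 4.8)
The plan is to reduce the whole statement to a single projective line and then to pin down the fourth point of a harmonic subset from three of its points, using that the ground field has characteristic $3$ (which, by Proposition \ref{prop2}, is the setting in which harmonic subsets are relevant here). A harmonic subset has exactly four elements, so if the intersection contains four points the two subsets already coincide; I therefore assume the intersection is exactly a three-element set $\{A,B,C\}$. Each harmonic subset lies on the line $\mathcal P(S)$ spanned by any two of its points, and since $A\ne B$ determine a unique line, both subsets lie on the same line $\mathcal P(S)$ with $\dim S=2$. Picking nonzero $a\in A$, $b\in B$ (necessarily independent) and a representative $\lambda a+\mu b$ of $C$ with $\lambda,\mu\ne 0$, I rescale to $x=\lambda a$, $y=\mu b$, so that $A=\langle x\rangle$, $B=\langle y\rangle$, $C=\langle x+y\rangle$. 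It then suffices to prove that the fourth point of \emph{every} harmonic subset containing $A,B,C$ equals $\langle x-y\rangle$.

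So let $\mathcal X$ be such a harmonic subset and write its fourth point as $D=\langle x+ty\rangle$ with $t\in R\setminus\{0,1\}$ (we may take $D\ne B$, giving a finite coordinate, and $D\notin\{A,C\}$ gives $t\ne 0,1$). Here the characteristic $3$ hypothesis enters decisively: by Example \ref{exmp-p2-1}, in characteristic $3$ every permutation of the four points of $\mathcal X$ extends to an element of ${\rm PGL}(S)$. In particular the transposition interchanging $A$ and $B$ and fixing $C$ and $D$ is induced by some $u\in{\rm GL}(S)$. From $u(\langle x\rangle)=\langle y\rangle$ and $u(\langle y\rangle)=\langle x\rangle$ I get $u(x)=\beta y$, $u(y)=\alpha x$; the requirement $u(\langle x+y\rangle)=\langle x+y\rangle$ forces $\alpha=\beta$, so after rescaling $u$ is the interchange $x\leftrightarrow y$. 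Then $u(x+ty)=tx+y$, and $u(\langle x+ty\rangle)=\langle x+ty\rangle$ forces $t^{2}=1$. Since $t\ne 1$, this gives $t=-1$ and hence $D=\langle x-y\rangle$.

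Consequently the fourth point is the same, $\langle x-y\rangle$, for both harmonic subsets through $A,B,C$, and the two subsets coincide. The one genuinely essential ingredient --- and the step I expect to draw the most scrutiny --- is the appeal to characteristic $3$. What makes it work is that in characteristic $3$ \emph{all} permutations of a harmonic subset are induced by projectivities, so the transposition $(A,B)$ fixing $C,D$ is available no matter how $\{A,B\}$ sits inside $\mathcal X$. In other characteristics only the cross-ratio--preserving permutations extend, this particular transposition may fail to extend, and indeed three distinct collinear points then admit several harmonic completions; so the argument must, and does, use exactly the characteristic-$3$ phenomenon recorded in Example \ref{exmp-p2-1} and Proposition \ref{prop2}.
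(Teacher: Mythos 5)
Your argument is correct in characteristic $3$, and it is a genuinely different route from the paper's: the paper offers no argument at all for this lemma, only the citation \cite[Section III.4, Proposition 2]{Baer}, whereas you give a self-contained proof from material inside the paper. Within characteristic $3$ every step checks out: the normalization $A=\langle x\rangle$, $B=\langle y\rangle$, $C=\langle x+y\rangle$ is legitimate; the transposition interchanging $A,B$ and fixing $C,D$ is available because in characteristic $3$ \emph{all} permutations of a harmonic subset extend (Example \ref{exmp-p2-1}), and this is a property of the set, independent of which of $A,B,C,D$ play the roles $\langle a\rangle,\langle b\rangle,\langle a+b\rangle,\langle a-b\rangle$ in a presentation of the subset; and the computation $t^{2}=1$, $t\ne 1$, hence $t=-1$ pins the fourth point to $\langle x-y\rangle$ for \emph{any} harmonic subset through $A,B,C$, which is exactly what is needed.

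The one substantive discrepancy is that you prove the lemma only under a characteristic-$3$ hypothesis that the lemma, as stated, does not carry. But here you are right and the literal statement is wrong: over a field of characteristic $\ne 2,3$ the claim is false. Concretely, over $\mathbb{Q}$ take independent $x,y$; then $\{\langle x\rangle,\langle y\rangle,\langle x+y\rangle,\langle x-y\rangle\}$ and $\{\langle x\rangle,\langle x+2y\rangle,\langle x+y\rangle,\langle y\rangle\}$ are both harmonic (for the second take $a=x$, $b=x+2y$, so $a+b=2(x+y)$ and $a-b=-2y$), they share the three points $\langle x\rangle,\langle y\rangle,\langle x+y\rangle$, yet they differ because $\langle x-y\rangle\ne\langle x+2y\rangle$ whenever $2\ne-1$. (In characteristic $2$ a harmonic subset has only three elements and the claim is trivial.) The characteristic-free fact in \cite{Baer} is the uniqueness of the fourth harmonic point relative to a \emph{fixed pairing} of the quadruple, which does not yield the unordered statement outside characteristic $3$; your counterexample-flavored remark about several harmonic completions of three collinear points is exactly the obstruction. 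Your restriction also costs the paper nothing: the only use of Lemma \ref{lemma-harm} is in the proof of Lemma \ref{lemma-t2-3}, where the harmonic subsets arise from Proposition \ref{prop2}, so the characteristic of $R'$ is automatically $3$ there. In short: your proof is correct precisely in the regime where the lemma is true and is applied, and it exposes a hypothesis the paper should have stated explicitly.
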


\begin{proof}
See, for example, \cite[Section III.4, Proposition 2]{Baer}.
\end{proof}

\section{Proof of Theorem \ref{theorem-main2}}
Let $n\ge 2$ and let $f:{\mathcal P}(V)\to {\mathcal P}(V')$ be a non-constant PGL-mapping.

We say that a subspace $S\subset V$ is {\it invariant} for $h\in {\rm PGL}(V)$ if $h$
transfers ${\mathcal P}(S)$ to itself, in other words, $S$ is invariant for every $u\in {\rm GL}(V)$
satisfying $h=\pi(u)$.

\begin{lemma}\label{lemma-t2-1}
The following assertions are fulfilled:
\begin{enumerate}
\item[{\rm (1)}] for every $h\in {\rm PGL}(V)$ there exists unique $\overline{h}\in {\rm PGL}(V_{g})$ such that
$fh={\overline h}f$,
\item[{\rm (2)}] the mapping $f$ is injective,
\item[{\rm (3)}] the mapping $h\to \overline{h}$ is a monomorphism of ${\rm PGL}(V)$ to ${\rm PGL}(V_{f})$.
\end{enumerate}
\end{lemma}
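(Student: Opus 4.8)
The plan is to mirror the proof of Lemma~\ref{lemma-t1-1}, establishing the three assertions in the order: existence in~(1), then~(2), then the uniqueness in~(1) together with~(3). Throughout I would use that ${\rm PGL}(V)$ is $2$-transitive on the points of ${\mathcal P}(V)$: given distinct $P,Q$ and distinct $R,S$, one chooses non-zero representatives (which are linearly independent since the points are distinct), extends them to bases of $V$, and takes the induced element of ${\rm PGL}(V)$. For the existence part of~(1), I fix $h\in{\rm PGL}(V)$ and use the condition~(PGL) to choose $h'\in{\rm PGL}(V')$ with $fh=h'f$. Exactly as in Lemma~\ref{lemma-t1-1}(1), for any image point $f(P)$ one has $h'f(P)=fh(P)$, so $h'$ permutes $f({\mathcal P}(V))$; since this set spans $V_{f}$, the subspace $V_{f}$ is invariant for $h'$, and the restriction $\overline{h}:=h'|_{{\mathcal P}(V_{f})}\in{\rm PGL}(V_{f})$ satisfies $fh=\overline{h}f$.

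Assertion~(2) is the projective analogue of Lemma~\ref{lemma-t1-2}. If $f(P)=f(Q)$ for distinct $P,Q$, then for any distinct $R,S$ I pick $h\in{\rm PGL}(V)$ with $h(P)=R$ and $h(Q)=S$ together with a lift $h'$ as above, whence $f(R)=h'f(P)=h'f(Q)=f(S)$; thus $f$ is constant, contradicting the hypothesis. This immediately yields the monomorphism half of~(3): if $\overline{h}={\rm id}_{V_{f}}$ then $fh=\overline{h}f=f$, and injectivity of $f$ forces $h={\rm id}$. The homomorphism property $\overline{hk}=\overline{h}\,\overline{k}$ is the identical computation as in Lemma~\ref{lemma-t1-1}(2), valid once the uniqueness in~(1) is in hand.

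The main obstacle is precisely that uniqueness, where the projective case is strictly harder than the linear one. In Lemma~\ref{lemma-t1-1} two lifts agreeing on the spanning set $g(V)$ of \emph{vectors} automatically agree on $V_{g}$ by linearity, but here agreement on a spanning set of \emph{points} does not force equality in ${\rm PGL}(V_{f})$. Concretely, if $\overline{h}_1,\overline{h}_2$ both satisfy $fh=\overline{h}_if$, then $k:=\overline{h}_2^{-1}\overline{h}_1$ fixes every point of $f({\mathcal P}(V))$; lifting $k$ to $u\in{\rm GL}(V_{f})$ and choosing linearly independent image points $\langle w_1\rangle,\dots,\langle w_{n'}\rangle$ with $n':=\dim V_{f}$ gives $u(w_i)=\lambda_i w_i$, so I must rule out that $u$ is non-scalar.

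This reduces uniqueness to showing that the image of $f$ contains a \emph{projective frame}, that is, a further point $\langle c_1 w_1+\dots+c_{n'} w_{n'}\rangle$ with all $c_i\ne 0$, for then $u(w_0)=\lambda_0 w_0$ forces all $\lambda_i$ equal and $k={\rm id}_{V_{f}}$. Here the injectivity from~(2) is the lever: when $R'$ is finite it already gives $|{\mathcal P}(V)|\le|{\mathcal P}(V_{f})|$, which together with $\dim V_{f}\le n$ forces $\dim V_{f}=n$ and $f$ onto ${\mathcal P}(V_{f})$, so the frame is automatically present. The remaining (infinite) case amounts to showing that the injective image is not swallowed by the finitely many coordinate hyperplanes $\langle w_j:j\ne i\rangle$ attached to an independent family of image points; this is the technical heart of the argument and where I expect the real difficulty to lie.
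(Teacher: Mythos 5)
Your handling of existence in (1), of (2), and of the homomorphism computation in (3) coincides with the paper's proof, which does exactly what you do: restrict a lift $h'$ to the invariant subspace $V_f$, use two-point transitivity of ${\rm PGL}(V)$ for injectivity, and repeat the computation of Lemma \ref{lemma-t1-1}. Where you diverge is the uniqueness in (1), and there you have put your finger on a real defect: the paper disposes of uniqueness with ``as in the proof of the first part of Lemma \ref{lemma-t1-1}'', but, precisely as you say, the linear argument (maps agreeing on a set spanning $V_g$ agree on $V_g$) has no projective analogue --- an element of ${\rm PGL}(V_f)$ can fix every point of a spanning subset of ${\mathcal P}(V_f)$ without being the identity. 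In fact, under the hypotheses actually in force in the lemma ($n\ge 2$, $f$ a non-constant PGL-mapping, \emph{no} restriction on $\dim V_f$), the uniqueness assertion is false, and the paper's own Example \ref{exmp-t2-1} witnesses this: take $R$ a finite field, $R'$ with more than two elements, and $f$ a bijection of ${\mathcal P}(V)$ onto an independent subset $\{\langle e_1\rangle,\dots,\langle e_m\rangle\}$ of ${\mathcal P}(V')$. This is a non-constant PGL-mapping, and for $h={\rm id}$ both the identity of ${\rm PGL}(V_f)$ and the class of the non-scalar linear map $e_i\mapsto e_i$ $(i<m)$, $e_m\mapsto ae_m$ with $a\ne 0,1$ satisfy $fh=\overline{h}f$. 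Consequently assertion (3) is not even well defined at this level of generality, and no argument --- yours or the paper's --- can close the gap without adding hypotheses.

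Your reduction of uniqueness to the presence of a projective frame in the image is correct, and the example above shows exactly why it cannot be completed: the image may be an independent set, which contains no frame. Your proposed escapes do not help. The hypothesis $\dim V_f\le n$ is not available in the lemma (the paper introduces it only after Lemma \ref{lemma-t2-3}), and even granting it your counting step fails: injectivity gives $|{\mathcal P}(V)|\le|{\mathcal P}(V_f)|$, but together with $\dim V_f\le n$ this does not force $\dim V_f=n$ or surjectivity (the seven points of ${\mathcal P}({\mathbb F}_2^3)$ inject into the eight-point projective line over ${\mathbb F}_7$). So the fair summary is: your proof of (1)-existence and of (2) is correct and matches the paper; for the uniqueness in (1), and hence for (3), you correctly detected a genuine gap, but it is a gap in the statement itself, silently present in the paper's proof, not something your frame strategy could have filled. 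Note also that the same defect infects Lemma \ref{lemma-t2-2}(2) (fixed-point sets of projective transformations are not subspaces), so a genuine repair of Section 5 would restate this lemma with only existence and injectivity, and recover uniqueness a posteriori, under the hypotheses of Theorem \ref{theorem-main2}, once a frame in the image of $f$ has been produced.
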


\begin{proof}
(1). Let $h\in {\rm PGL}(V)$. We take any $h'\in {\rm PGL}(V')$ such that $fh=h'f$.
As in the proof of the first part of Lemma \ref{lemma-t1-1},
we show that $V_{f}$ is invariant for $h'$ and
the equality $h'|_{{\mathcal P}(V_{f})}=h''|_{{\mathcal P}(V_{f})}$ holds for any $h''\in {\rm PGL}(V')$ satisfying $fh=h''f$.

(2).
Suppose that $f(P)=f(Q)$ for some distinct $P,Q\in {\mathcal P}(V)$.
Then
$$fh(P)={\overline h}f(P)={\overline h}f(Q)=fh(Q)\;\;\;\;\;\forall\;h\in {\rm PGL}(V).$$
For every $T\in {\mathcal P}(V)$ there exists $h\in {\rm PGL}(V)$ leaving fixed $P$ and transferring $Q$ to $T$.
Then
$$f(P)=fh(P)=fh(Q)=f(T)$$
which means that $f$ is constant, a contradiction.

(3). As in the proof of the second part of Lemma \ref{lemma-t1-1}, 
we establish that the mapping $h\to \overline{h}$ is a homomorphism of ${\rm PGL}(V)$ to ${\rm PGL}(V_{f})$.
If $\overline{h}$ is the identity element of ${\rm PGL}(V_{f})$ then
$$fh(P)={\overline h}f(P)=f(P)\;\;\;\;\;\forall\;P\in {\mathcal P}(V).$$
Since $f$ is injective, the latter equality implies that 
$h(P)=P$ for every $P\in {\mathcal P}(V)$.
\end{proof}

\begin{lemma}\label{lemma-t2-2}
Let $h\in {\rm PGL}(V)$ and let $S$ be a subspace of $V$.
The following assertions are fulfilled:
\begin{enumerate}
\item[{\rm (1)}] if $S$ is invariant for $h$ then $S_{f}$ is invariant for ${\overline h}$,
\item[{\rm (2)}] if $h|_{{\mathcal P}(S)}$ is identity then ${\overline h}|_{{\mathcal P}(S_{f})}$ is identity.
\end{enumerate}
\end{lemma}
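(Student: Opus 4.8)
The plan is to prove assertion (1) first and then use it, together with the uniqueness already established in Lemma~\ref{lemma-t2-1}, to settle (2). For (1) I would copy the argument for the first assertion of Lemma~\ref{lemma-t1-3}. Assume $S$ is invariant for $h$. Since $h$ is a bijection of ${\mathcal P}(V)$ carrying ${\mathcal P}(S)$ into itself, it permutes ${\mathcal P}(S)$; hence, using $\overline{h}f=fh$, the transformation $\overline{h}$ carries $f({\mathcal P}(S))$ onto $f(h({\mathcal P}(S)))=f({\mathcal P}(S))$. As $\overline{h}$ is induced by a linear automorphism of $V_{f}$ and $S_{f}$ is spanned by $f({\mathcal P}(S))$, it follows that $\overline{h}$ maps $S_{f}$ onto itself, i.e. $S_{f}$ is invariant for $\overline{h}$.

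The genuine difficulty is (2), because the naive transcription of Lemma~\ref{lemma-t1-3}(2) fails. In the linear case the vectors fixed by $\overline{u}$ form a subspace, so once this set contains a spanning family it contains the whole span; but the points of ${\mathcal P}(S_{f})$ fixed by $\overline{h}$ form only a union of projectivized eigenspaces, and such a union may contain a spanning family of $S_{f}$ without being all of ${\mathcal P}(S_{f})$. Thus the immediate observation that $\overline{h}$ fixes every point of $f({\mathcal P}(S))$ --- indeed $\overline{h}(f(P))=fh(P)=f(P)$ once $h|_{{\mathcal P}(S)}$ is the identity --- does not on its own yield $\overline{h}|_{{\mathcal P}(S_{f})}={\rm id}$.

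To overcome this I would restrict to $S$ and apply Lemma~\ref{lemma-t2-1} to the restricted map, exactly as $g|_{U}$ was treated as a GL-mapping in the proof of Lemma~\ref{lemma-t1-7}. The restriction $f|_{{\mathcal P}(S)}:{\mathcal P}(S)\to{\mathcal P}(S_{f})$ is a non-constant PGL-mapping: it is non-constant because $f$ is injective (Lemma~\ref{lemma-t2-1}(2)) and ${\mathcal P}(S)$ has more than one point, and it satisfies (PGL) because every element of ${\rm PGL}(S)$ extends to some $h\in{\rm PGL}(V)$ leaving $S$ invariant, for which part~(1) furnishes $\overline{h}|_{{\mathcal P}(S_{f})}\in{\rm PGL}(S_{f})$ as the required conjugating element. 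Now suppose $h|_{{\mathcal P}(S)}$ is the identity. Both ${\rm id}_{{\mathcal P}(S_{f})}$ and $\overline{h}|_{{\mathcal P}(S_{f})}$ lie in ${\rm PGL}(S_{f})$ and commute with $f|_{{\mathcal P}(S)}$ in the manner attached to the identity of ${\rm PGL}(S)$, since $\overline{h}$ fixes $f(P)$ for every $P\in{\mathcal P}(S)$. The uniqueness in Lemma~\ref{lemma-t2-1}(1), applied to the PGL-mapping $f|_{{\mathcal P}(S)}$, then forces $\overline{h}|_{{\mathcal P}(S_{f})}={\rm id}$. (Equivalently, $k\mapsto\overline{h}|_{{\mathcal P}(S_{f})}$ is the homomorphism of Lemma~\ref{lemma-t2-1}(3) for $f|_{{\mathcal P}(S)}$ and so sends the identity to the identity.) The main obstacle is precisely the passage just described: identifying the right self-contained structure ($f|_{{\mathcal P}(S)}$ as a PGL-mapping) on which the uniqueness clause can be invoked, which circumvents the eigenspace phenomenon that blocks a direct imitation of the linear proof.
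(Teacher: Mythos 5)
Your proof of part (1) coincides with the paper's: the paper disposes of the whole lemma with the single line ``Similar to the proof of Lemma \ref{lemma-t1-3}'', and for (1) that transcription indeed works verbatim. For part (2) you depart from the paper, and your diagnosis of why you must is correct. In the linear case the fixed vectors of $\overline{u}$ form the subspace ${\rm Ker}({\rm id}_{V_{g}}-\overline{u})$, so containing $g(S)$ forces containing its span; the fixed points of a projective transformation form only a union of projectivized eigenspaces, and such a union can contain a spanning set of ${\mathcal P}(S_{f})$ without containing all of ${\mathcal P}(S_{f})$ (a transformation of a projective plane induced by a diagonal matrix with distinct eigenvalues fixes the two coordinate points, which span, yet is not the identity). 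Your repair --- verify that $f|_{{\mathcal P}(S)}$ is a non-constant PGL-mapping (non-constancy from the injectivity in Lemma \ref{lemma-t2-1}, the condition (PGL) from part (1) together with the fact that every element of ${\rm PGL}(S)$ extends to an element of ${\rm PGL}(V)$ leaving $S$ invariant), then apply the uniqueness clause of Lemma \ref{lemma-t2-1}(1) to $f|_{{\mathcal P}(S)}$ and the identity of ${\rm PGL}(S)$ --- is a legitimate reduction and yields (2) from the stated Lemma \ref{lemma-t2-1}. Two small additions are needed: the cases $\dim S\le 1$ must be handled separately (they are trivial), since your restriction argument requires ${\mathcal P}(S)$ to contain at least two points and Lemma \ref{lemma-t2-1} is formulated for dimension at least $2$.

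One caution about your closing claim that this ``circumvents the eigenspace phenomenon'': it relocates the phenomenon rather than removes it. The uniqueness clause of Lemma \ref{lemma-t2-1}(1), which now carries the entire weight of your argument, is itself proved in the paper ``as in the proof of Lemma \ref{lemma-t1-1}'', i.e.\ by exactly the step you rejected: two projective transformations agreeing on $f({\mathcal P}(V))$ are claimed to agree on ${\mathcal P}(V_{f})$ because the former set spans the latter. For an arbitrary non-constant PGL-mapping this is false: for the mapping of Example \ref{exmp-t2-1} (a bijection of ${\mathcal P}(V)$ onto an independent subset), when $R'$ has more than two elements, every transformation induced by a non-scalar diagonal automorphism fixes each point of the image of $f$, so $\overline{h}$ fails to be unique already for $h={\rm id}$. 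Thus your proof is exactly as sound as Lemma \ref{lemma-t2-1}(1) as stated --- a permissible position when writing the proof of a later lemma, but worth flagging: a self-contained treatment would have to establish that uniqueness under hypotheses guaranteeing that the image of $f$ (respectively of $f|_{{\mathcal P}(S)}$) contains a projective frame, and that is where the real content of assertion (2) lies.
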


\begin{proof}
Similar to the proof of Lemma \ref{lemma-t1-3}.
\end{proof}

From this moment we suppose that $R'$ is a field. 
We use Proposition \ref{prop2} to prove the following.

\begin{lemma}\label{lemma-t2-3}
The mapping $f$ transfers any independent subset consisting of $n-1$ elements to an  independent subset.
\end{lemma}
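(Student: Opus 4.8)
The plan is to mirror the proof of Lemma \ref{lemma-t1-5}, replacing Proposition \ref{prop1} by Proposition \ref{prop2} and linearly independent families of vectors by $m$-simplices of points. First I would fix an independent subset ${\mathcal X}=\{P_{1},\dots,P_{n-1}\}$ and enlarge it to an $n$-simplex. Concretely, I would choose non-zero vectors $x_{1}\in P_{1},\dots,x_{n-1}\in P_{n-1}$; they are linearly independent, so I can extend them to a base $x_{1},\dots,x_{n}$ of $V$ and put $P_{n}=\langle x_{n}\rangle$ and $P_{n+1}=\langle x_{1}+\dots+x_{n}\rangle$. Then ${\mathcal Y}=\{P_{1},\dots,P_{n+1}\}$ is an $n$-simplex, and every permutation on ${\mathcal Y}$ is extendable to an element of ${\rm PGL}(V)$.

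Next I would transport the permutation-extension property across $f$, exactly as in Lemma \ref{lemma-t1-5}. Since $f$ is injective (Lemma \ref{lemma-t2-1}(2)), the restriction $f|_{{\mathcal Y}}$ is a bijection onto the $(n+1)$-element set ${\mathcal Y}':=f({\mathcal Y})$. Given any permutation $s'$ on ${\mathcal Y}'$, the conjugate $s:=(f|_{{\mathcal Y}})^{-1}s'(f|_{{\mathcal Y}})$ is a permutation on ${\mathcal Y}$ satisfying $fs=s'f$ on ${\mathcal Y}$; extending $s$ to $h\in {\rm PGL}(V)$ and using $fh=\overline{h}f$ (Lemma \ref{lemma-t2-1}(1)) gives $\overline{h}|_{{\mathcal Y}'}=s'$. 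Hence every permutation on ${\mathcal Y}'\subset {\mathcal P}(V_{f})$ extends to an element of ${\rm PGL}(V_{f})$, so Proposition \ref{prop2} applies to ${\mathcal Y}'$ (this is where the hypothesis that $R'$ is a field is used).

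Finally I would run the case analysis of Proposition \ref{prop2} applied to ${\mathcal Y}'$. As ${\mathcal Y}'$ has $n+1$ elements, it is either an independent subset, or an $m'$-simplex with $m'+1=n+1$ (so $m'=n$), or a harmonic subset; the last case forces $n+1=4$, hence $n=3$ and the characteristic of $R'$ equal to $3$. If ${\mathcal Y}'$ is independent then each of its subsets is independent, so $f({\mathcal X})$ is independent. If ${\mathcal Y}'$ is an $n$-simplex then every $n$-element subset of ${\mathcal Y}'$ is independent; since $f({\mathcal X})$ has $n-1$ elements it is contained in such a subset and is therefore independent. If ${\mathcal Y}'$ is harmonic then $n=3$ and $f({\mathcal X})=\{f(P_{1}),f(P_{2})\}$ is a pair of distinct points, which is independent. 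Thus $f({\mathcal X})$ is an independent subset in every case.

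I expect the main obstacle to be the harmonic case, and it is precisely what dictates embedding ${\mathcal X}$ into an $n$-simplex rather than merely into a maximal independent set. Had I enlarged ${\mathcal X}$ only to $n$ independent points, the harmonic alternative of Proposition \ref{prop2} could arise for $n=4$ with $f({\mathcal X})$ a three-point subset of a harmonic set, which spans a $2$-dimensional subspace and hence is \emph{not} independent. Passing to the $n+1$ points of a simplex pushes the harmonic possibility down to $n=3$, where the relevant image has only two elements and independence is automatic.
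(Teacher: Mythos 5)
Your proof is correct, but it diverges from the paper's at the key combinatorial step, in a way worth recording. The paper enlarges ${\mathcal X}$ only to a \emph{maximal independent} subset ${\mathcal Y}$, so $f({\mathcal Y})$ has $n$ elements, and Proposition \ref{prop2} then leaves open the possibility that $f({\mathcal Y})$ is harmonic when $n=4$; disposing of that case costs the paper an extra argument --- a second maximal independent subset ${\mathcal Y}'\supset {\mathcal X}$, the observation that $f({\mathcal Y}')$ must then also be harmonic, and Lemma \ref{lemma-harm} (two harmonic subsets sharing three points coincide) to contradict the injectivity of $f$. You instead enlarge ${\mathcal X}$ to an $n$-simplex, so the image has $n+1$ elements and the harmonic alternative can only occur for $n=3$, where $f({\mathcal X})$ consists of two distinct points and is trivially independent; Lemma \ref{lemma-harm} is never needed. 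The trade-off is that your route requires every permutation on an $n$-simplex of ${\mathcal P}(V)$ to extend to an element of ${\rm PGL}(V)$ with $R$ merely a division ring; this is legitimate, since the paper quotes exactly this fact from Baer in Section 4, and it also follows from Example \ref{exmp-p1-1} (whose computation is valid over any division ring) applied to the vectors $x_{1},\dots,x_{n},-(x_{1}+\dots+x_{n})$. So the paper's choice uses only the elementary extendability of permutations on bases in the source space but pays with the harmonic-case analysis, while yours invokes a stronger source-side input and gets a shorter, cleaner case analysis; the transport of the permutation-extension property through $f$ (via injectivity, Lemma \ref{lemma-t2-1}) is identical in both.
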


\begin{proof}
Let ${\mathcal X}\subset {\mathcal P}(V)$ be an independent subset consisting of $n-1$ elements
and let ${\mathcal Y}$ be a maximal independent subset of ${\mathcal P}(V)$ containing ${\mathcal X}$.
Note that ${\mathcal Y}$ consists of $n$ elements and every permutation on ${\mathcal Y}$
can be extended to an element of ${\rm PGL}(V)$.
As in the proof of Lemma \ref{lemma-t1-5}, 
we show that every permutation on $f({\mathcal Y})$ is extendable to an element of ${\rm PGL}(V_{f})$.
By Proposition \ref{prop2}, $f({\mathcal Y})$ is an independent subset or an $(n-1)$-simplex or a harmonic subset.
In the first and second cases,
every $(n-1)$-element subset of $f({\mathcal Y})$ is independent which implies that $f({\mathcal X})$ is independent.
Show that the third possibility is not realized.

Suppose that $f({\mathcal Y})$ is a harmonic subset. Then $n=4$ and $f({\mathcal X})$ is a $2$-simplex.
We take any maximal independent subset  ${\mathcal Y}'$ containing ${\mathcal X}$
and distinct from ${\mathcal Y}$.
Then $f({\mathcal Y}')$ is an independent subset or a $3$-simplex or a harmonic subset.
Since $f({\mathcal Y}')$ contains the $2$-simplex $f({\mathcal X})$,
the first and second possibilities can not be realized.
So, $f({\mathcal Y}')$ is a harmonic subset.
The intersection of the harmonic subsets $f({\mathcal Y})$ and $f({\mathcal Y}')$ coincides with $f({\mathcal X})$,
hence it contains three elements. By  Lemma \ref{lemma-harm}, we have $f({\mathcal Y})=f({\mathcal Y}')$ which contradicts
the injectivity of $f$.
\end{proof}

Suppose that $\dim V_{f}\le n$.
Using Lemma \ref{lemma-t2-3}, the second part of Lemma \ref{lemma-t2-2}
and the fact that the mapping $h\to {\overline h}$ is a monomorphism of ${\rm PGL}(V)$ to ${\rm PGL}(V_{f})$, 
we establish the following:
$\dim V_{f}=n$ and for every $(n-1)$-dimensional subspace $S\subset V$
the subspace $S_{f}$ is $(n-1)$-dimensional (the proof is similar to the proof of Lemma \ref{lemma-t1-6}).

Let $n\ge 3$. 
By the first part of Lemma \ref{lemma-t2-2}, for every subspace $S\subset V$ 
(we assume that $\dim S\ge 2$)
the restriction of $f$ to ${\mathcal P}(S)$ is a PGL-mapping.
As in the proof of Lemma \ref{lemma-t1-7}, we show that $\dim S_{f}=\dim S$
for every non-zero subspace $S\subset V$.
Then $f$ satisfies the conditions of Theorem \ref{theoremFFH}
and there is a semilinear injection $l:V\to V'$ such that $f=\pi(l)$.
Since $\dim V_{l}=\dim V_{f}=n$, the mapping $l$ sends bases of $V$ to bases of $V_{f}$
which means that $l$ is a strong semilinear embedding.

\section{Examples}
The following example shows that the condition $\dim V_{g}\le n$ in Theorem \ref{theorem-main1}
cannot be omitted.
\begin{exmp}\label{exmp-t1-1}{\rm
Let $l:V\to V'$ be a strong semilinear embedding.
Suppose that $\dim V'>n$ and take any vector $y\in V'$ which does not belong to 
the $n$-dimensional subspace $V_{l}$.
Consider the mapping $g:V\to V'$ defined as follows
$$g(x)=
\begin{cases}
l(x)&x\ne 0\\
y &x=0\,.
\end{cases}$$
Then $\dim V_{g}=n+1$ and $g$ is a GL-mapping.
Indeed, for every $u\in {\rm GL}(V)$ there exists $\overline{u}\in {\rm GL}(V_{l})$ satisfying $lu=\overline{u}l$
and we have $gu=u'g$ for every $u'\in {\rm GL}(V')$ such that $u'|_{V_{l}}=\overline{u}$ and $u'(y)=y$.
}\end{exmp}

There are other non-trivial GL-mappings which are not strong semilinear embeddings.

\begin{exmp}[H. Havlicek]\label{exmp-t1-2}{\rm
Suppose that $R$ is a finite field and $\dim V'=|V|$. 
Consider any bijection $g$ of $V$ to a base $B'$ of $V'$.
If $u$ is a bijective transformation of $V$ then $gug^{-1}$ is a permutation on $B'$.
Since $B'$ is a base of $V'$, we can extend $gug^{-1}$ to a certain $u'\in {\rm GL}(V')$.
Then $gu=u'g$. Therefore, $g$ is a GL-mapping.
The dimension of $V_{g}$ is greater than $n$.
}\end{exmp}

We modify the latter example and construct non-constant PGL-mappings which 
are not induced by strong semilinear embeddings.

\begin{exmp}\label{exmp-t2-1}{\rm
Suppose that $R$ is a finite field and $\dim V'\ge m=|{\mathcal P}(V)|$. 
Consider any bijection $f$ of ${\mathcal P}(V)$ to an independent subset or an $m$-simplex of ${\mathcal P}(V')$.
As in Example \ref{exmp-t1-2}, we show that $f$ is a PGL-mapping.
Note that  $\dim V_{f}>n+1$ if $n\ge 3$. 
So, this example does not show that the condition 
$\dim V_{f}\le n$ in Theorem \ref{theorem-main2} cannot be omitted.
}\end{exmp}

\subsection*{Acknowledgement}
I express my deep gratitude to 
Hans Havlicek for Example \ref{exmp-t1-2}.
Also, I am  grateful to Andrea Blunck and Warren Dicks for information
and Andrzej Matras and Jaroslaw Kosiorek for discussion.

\end{document}